\journal{Computational Statistics and Data Analysis}
\newcommand{\bbE}{\mbox{$\mathbb{E}$}}
\newcommand\raiseT[2]{%
\setbox0\hbox{$#1{#2}$}\raise\dp0\box0}
\newcommand{\bea}{\begin{eqnarray}}
\newcommand{\eea}{\end{eqnarray}}
\newcommand{\ba}{\begin{eqnarray*}}
\newcommand{\ea}{\end{eqnarray*}}
\newcommand{\be}{\begin{equation}}
\newcommand{\ee}{\end{equation}}
\newcommand{\bi}{\begin{itemize}}
\newcommand{\ei}{\end{itemize}}
\newcommand{\ind}{\mathbbm{1}}
\newtheorem{theorem}{Theorem}[section]
\newtheorem{lemma}{Lemma}[section]
\begin{document}

\begin{frontmatter}

\title{Consistency of the  MLE under a two-parameter gamma mixture model
with a structural shape parameter
}


\author[mymainaddress]{Mingxing He}

\author[mysecondaryaddress,mythirdaddress]{Jiahua Chen\corref{mycorrespondingauthor}}
\cortext[mycorrespondingauthor]{Corresponding author}
\ead{jhchen@stat.ubc.ca}

\address[mymainaddress]{Yunnan Key Laboratory of Statistical Modeling and Data Analysis,Yunnan University, Kunming {\rm 650091}, China}
\address[mysecondaryaddress]{Research Institute of Big Data, Yunnan University, Kunming  {\rm 650221}, China}
\address[mythirdaddress]{Department of Statistics, University of British Columbia, Vancouver  {\rm V7C 5K5}, Canada}

\begin{abstract}
The finite Gamma mixture model is often used to describe randomness
in income data, insurance data, and data from other applications.
The popular likelihood approach, however, does not work
for this model because the likelihood function
is unbounded, and the maximum likelihood estimator is therefore
not well defined.
There has been much research into ways to ensure the consistent estimation of
the mixing distribution, including placing an upper bound on
the shape parameter or adding a penalty to the log-likelihood function.
In this paper, we show that if the shape parameter in the
finite Gamma mixture model is structural, then
the maximum likelihood estimator of the mixing distribution is well defined
and strongly consistent.
We also present simulation results demonstrating
the consistency of the estimator.
We illustrate the application of the
model with a structural scale parameter to household income data.
The fitted mixture distribution leads to several
possible subpopulation structures
in terms of the level of disposable income.
\end{abstract}

\begin{keyword}
EM algorithm, finite Gamma mixture model, maximum likelihood estimator,
strong consistency,  structural  parameter.
\end{keyword}
\end{frontmatter}

\linenumbers

\section{Introduction}
Finite mixture distributions are widely used to model data collected
from a heterogeneous population: the population contains several subpopulations,
and each can be modeled by a distribution from a parametric distribution family.
Let $\{f(x; \theta): \theta \in \Theta\}$ be the density functions
of a parametric distribution family with parameter space $\Theta$.
A finite mixture distribution of order $m$ on this family has the density function
\be
\label{general.mixture}
f(y;  G) = \sum_{j=1}^m \alpha_j f(y; \theta_j)
\ee
for mixing proportions $\alpha_j \in [0, 1]$ such that $\sum_{i=1}^m \alpha_j = 1$,
and $m$ subpopulation parameters $\theta_j \in \Theta$.
We refer to $G$ as a mixing distribution that assigns probability $\alpha_j$ to
value $\theta_j$ in the subpopulation parameter space $\Theta$.
We refer to $f(y; \theta)$ as the subpopulation or component density function.
When we include all the distributions $G$ on $\Theta$, not merely those with a finite
number of support points, and we replace the summation in \eqref{general.mixture}
with integration, a general mixture model emerges.
This paper, however, will focus on a special class of finite mixture models.

There is a rich statistical literature on the theory and applications of mixture models.
More than a hundred years ago, \cite{pearson1894} used a finite Gaussian mixture
model of order two to model crab data suspected to consist of two species.
He used the method of moments because it is less computationally demanding.
Most recently, \cite{mclachlan2019finite} provided a thorough review of finite
mixture models.
\cite{KW1956} proved the general consistency as $n \to \infty$ of the maximum likelihood
estimator (MLE) of $G$ given a set of independent and identically distributed
(IID) samples of size $n$. Hereafter, we may refer to an IID sample
as a random sample.
\cite{dempster1977maximum} introduced the EM-algorithm,
an easy-to-implement numerical method to find the MLE for
finite mixture models. The convergence of this algorithm
was thoroughly discussed in \cite{Wu1983}.

The MLE consistency result given by \cite{KW1956} does not
apply to some important cases. Notably, the MLE of $G$ under
the finite normal mixture model is not well defined because its likelihood
function based on a random sample is unbounded.
To estimate $G$ consistently via the likelihood approach,
one may apply a regularizing penalty function
to the likelihood. The consistency of a penalized MLE was
rigorously established in \cite{Chen2008} and \cite{Chen2009}
for the univariate and multivariate cases respectively;
see \cite{Ciuperca2003} and \cite{Tanaka2009} for related developments.
An alternative approach is to place constraints on the range of the component
parameters $\Theta$. For instance, the MLE is consistent when
the ratio of any two-component variances is bounded by a prespecified constant:
see \cite{Hathaway1985}, \cite{Tanaka2005}, \cite{Tanaka2006}, and \cite{Chen2016}.
A special case arises when the subpopulations of
the finite normal mixture model share the same variance:
the original MLE is consistent in this case \citep{chen2017}.
Recently, \cite{LIU201929} showed that the MLE is
consistent under finite mixtures of location-scale distributions with a
structural scale parameter.

We are interested in finite mixtures of Gamma distributions.
These models have applications to data on the price
of commercial products,  the cost of insurance, household income, and so on:
see \cite{liu2003testing}, \cite{wong2014test}, \cite{willmot2011risk}, and \cite{yin2019consistency}.
This paper will provide an example in which these models
provide meaningful subpopulation structures for household income data.

Somewhat surprisingly, the likelihood function of the finite Gamma mixture
model is unbounded \citep{Chen2016}. This leads to the failure of the MLE
for this model, as for the finite normal mixture model.
Given the consistency results for the latter model,
it would be interesting to know if the MLE (without penalty)  is consistent when
the subpopulation distributions of the former model
have a {\it structural} shape or  scale parameter.
We prove that this is the case when the shape parameter is structural.

In the next section, we present our main results, showing that
the MLE under the two-parameter finite
Gamma mixture model is consistent when the shape parameter is structural.
In Section \ref{sec3},
we show that the MLE of the structural shape parameter almost surely
falls into a compact interval [$\tau, \Delta$]. This crucial result paves the
way for the final proof of the consistency of the MLE.
In Section \ref{sec4}, we prove the consistency conclusion.
In Section \ref{sec5}, we supplement the theoretical proof with
simulation experiments to numerically demonstrate the consistency
of the MLE.
In Section \ref{sec6}, we fit the income
data set with finite mixtures of Gamma distributions with various orders.
We find that a model of order three or
four provides a good fit, and the fitted models also suggest
subpopulation structures.
Section \ref{sec7} provides a discussion.

\section{Properties of the Gamma distribution and the finite Gamma mixture model}
\label{sec2}

\subsection{Preparation}
The Gamma distribution is the two-parameter
distribution for which the density function is given by
\[
 f(x; r, \theta)
 =\frac{x^{r-1}\exp{(-x/\theta)}}{\theta^r \Gamma(r)}
\]
over $x>0$ with shape parameter $r$ and scale parameter $\theta$.
The well-known Gamma function is defined, for all $ r > 0$, by
\[
\Gamma (r) = \int_0^\infty  x^{r-1} \exp(-x) dx.
\]
The parameter space of the two-parameter Gamma distribution is given by
\[
\Omega=\{(r, \theta): 0 < r < \infty, 0 < \theta < \infty \}
= \mathbb{R}^+ \times \mathbb{R}^+.
\]
The density function of this distribution is smooth and has nice analytical properties,
such as a convenient moment generating function.
However, if the shape parameter $r \in (0, 1)$, then the density function is unbounded.
This is evident when $r = 0.5$ and $\theta = 1$. In this case, we have
for some constant $C > 0$,
\[
f(x; r=0.5, \theta = 1) = C  x^{-.5}\exp( - x)  \to \infty
\]
as $x \to 0_+$.
This property complicates some of the technical discussion.
Since a smooth and one-to-one data transformation does
not lose any information,
it is convenient to introduce a  logarithm transformation to simplify the
presentation.

Suppose $X$ is a Gamma distributed random variable, and let $Y=\log{X}$.
The density function of $Y$ for the same parameter space
$\Omega$ becomes
\begin{equation}
\label{loggammadensity}
g(y; r, \theta)
=
\frac{1}{\Gamma(r)}\exp{\{r(y-\log{\theta})-\exp{(y-\log{\theta})}\}}
\end{equation}
for $y \in \mathbb{R}$.
Clearly, this density function in $y$ is bounded for any given $r$ and $\theta$.
Given a random sample from a finite Gamma mixture distribution,
a logarithm transformation will lead to a random sample from
a finite log Gamma mixture distribution.
The MLE of $G$ based on data in $X$ is the same as the MLE based on
data in $Y$.

For a finite mixture of log Gamma distribution
of order $m$ where the subpopulation distributions share an equal
shape parameter $r$ of unknown value the density function is
\be
\label{eq3.1}
g(y;  r, G)=
\int_{\mathbb{R}^+} g(y; r,\theta)\,dG(\theta)
=
\sum_{j=1}^m \alpha_j g(y; r, \theta_j).
\ee
In this case, $g(y; r, \theta)$ is the subpopulation/component density function, and
$(\alpha_1,\alpha_2, \ldots,\alpha_m)$ are the mixing proportions.
The mixing distribution can be presented by its cumulative distribution
function
\[
G(\theta)=\sum_{j=1}^m \alpha_j \mathbb{I}\{\theta_j\leq\theta\},
\]
where $\mathbb{I}(\cdot)$ is an indicator function.
The parameter $\theta_j$  is a support point of the mixing distribution $G(\theta)$.
We may also write $G(\theta)$  as shorthand for
$G=\sum_{j=1}^m \alpha_j \{\theta_j\}$.

In the above setting, the $G$ mixes only the scale parameter
and leaves $r$ as a common shape parameter.
For this reason, we call  $r$  a structural parameter;
its parameter space is $\mathbb{R^+}$.
We denote the space of all mixing distributions with at most $m$ supports in
$\mathbb{R^+}$ as
\begin{equation}
\label{mixing.space.of.scale.for.common.shape}
\mathbb{G}_m
=
\big \{G: G=\sum_{j=1}^m \alpha_j \{\theta_j\};
	\alpha_j \in [0,1], \theta_j \in \mathbb{R}^+; \sum_{j=1}^m \alpha_j=1;  j=1, \ldots, m \big\}.
\end{equation}
Note that $\mathbb{G}_m$ permits $\alpha_j = 0$ or equal $\theta_j$ values.
We are interested in the Gamma mixture model with
the parameter space of $(r, G)$ being $\mathbb{R}^+ \times \mathbb{G}_m$.

\subsection{Finite expectation and extended  Glivenko--Cantelli theorem }

The finite Gamma mixture model with a structural shape parameter has
some nice properties that are easy to verify. They are given below for
subsequent reference.

\begin{lemma}
\label{Expectation}
Let $X$ be a random variable with finite Gamma mixture distribution $f(x; r^*, G^*)$
where $G^*=\sum_{j=1}^m \alpha_j^* \{\theta_j^* \}$ for some $m$ and $Y = \log (X)$.
Let the density function of $Y$ be $g(y; r^*, G^*)$ as in \eqref{eq3.1}.
Then the expectations of $Y$, $\exp{(Y)}$, and $\log \{g(Y; r^*,G^*)\}$ exist and are finite.
\end{lemma}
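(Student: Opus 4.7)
The three claims correspond to three integrability checks against the mixture density, and the plan is to reduce each of them, by linearity over components, to a single-component computation for a Gamma (or log-Gamma) density.

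For $\bbE[X]$, since $X \sim f(x; r^*, G^*) = \sum_{j=1}^m \alpha_j^* f(x; r^*, \theta_j^*)$, I would just note that each Gamma component has mean $r^*\theta_j^*$, so $\bbE[X] = \sum_j \alpha_j^* r^* \theta_j^* < \infty$. For $\bbE[Y] = \bbE[\log X]$, it suffices to show $\bbE|\log X| < \infty$ in each component. I would split the integral at $x=1$: for $x \geq 1$, $|\log x| \leq x$ and finiteness follows from $\bbE[X]<\infty$; for $0 < x \leq 1$, the integrand $x^{r^*-1}|\log x|$ is integrable on $(0,1]$ because $\int_0^1 x^{r^*-1}|\log x|\,dx < \infty$ for every $r^* > 0$ (a standard Beta-function/ derivative computation). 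Summing across components gives $\bbE|Y| < \infty$.

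The main work is the third expectation. Writing $\log g(y; r^*,\theta) = -\log\Gamma(r^*) + r^*(y - \log\theta) - \exp(y-\log\theta)$, I would bound the mixture log-density from both sides: from above by $\log g(y; r^*, G^*) \leq \log\bigl(m\max_j g(y;r^*,\theta_j^*)\bigr) = \log m + \max_j \log g(y;r^*,\theta_j^*)$, and from below by $\log g(y;r^*,G^*) \geq \log\alpha_{j_0}^* + \log g(y;r^*,\theta_{j_0}^*)$ for any $j_0$ with $\alpha_{j_0}^* > 0$. Hence
\[
|\log g(y; r^*, G^*)| \leq C + \sum_{j=1}^m \bigl|\log g(y; r^*, \theta_j^*)\bigr|
\]
for some finite $C$ depending only on $(m, r^*, \alpha_j^*,\theta_j^*)$. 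Each term on the right is a linear combination of $1$, $|y|$, and $\exp(y-\log\theta_j^*) = X/\theta_j^*$, whose expectations under $g(\cdot; r^*, G^*)$ have already been shown finite in the first two steps.

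The only mild obstacle is the one near $x=0$: making sure $\int_0^1 x^{r-1}|\log x|\,dx < \infty$ for all $r > 0$, which is what rules out trouble from the unbounded density when $r < 1$. This is elementary (e.g.\ by comparing with $x^{r/2-1}$ near the origin), so overall the lemma follows by assembling the three bounds above.
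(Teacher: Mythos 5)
Your proof is correct, and for the delicate third claim it shares the paper's key idea: bound $\log g(y;r^*,G^*)$ from below by a single component, $\log\alpha_{j_0}^* + \log g(y;r^*,\theta_{j_0}^*)$, and observe that the resulting terms are affine in $y$ and $\exp(y)$, whose expectations are finite. Where you genuinely diverge is in how the ingredients are established. The paper gets $\mathbb{E}[Y]$ and $\mathbb{E}[\exp(Y)]$ in one stroke from the moment generating function $M_Y(t)=\sum_j \alpha_j^*\theta_j^{*t}\Gamma(r^*+t)/\Gamma(r^*)$, which is finite on an open interval containing $0$, so all moments of $Y$ exist and $\mathbb{E}[\exp(Y)]=M_Y(1)<\infty$; you instead verify $\mathbb{E}[X]<\infty$ componentwise and control $\mathbb{E}|\log X|$ by splitting at $x=1$ and checking $\int_0^1 x^{r^*-1}|\log x|\,dx<\infty$, which is more elementary and makes explicit why the unbounded density for $r^*<1$ causes no harm. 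For the upper bound on $\log g$, the paper simply uses that the log-Gamma mixture density is bounded in $y$ (so $\mathbb{E}[\log g]^+$ is trivially finite), while you use $\log g \le \log m + \max_j \log g(y;r^*,\theta_j^*)$ and fold everything into the sandwich $|\log g(y;r^*,G^*)| \le C + \sum_j |\log g(y;r^*,\theta_j^*)|$; both are valid, yours being slightly longer but fully reduced to component-level quantities, the paper's being shorter because boundedness of $g$ in $y$ is immediate from \eqref{loggammadensity}. No gaps.
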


\begin{proof}
The moment generating function of $Y$ is given by
\[
M_Y (t)
=\bbE^* \{ \exp{(t Y)}\}
=\sum_{j=1}^m\big \{\alpha_j^*\theta_j^{*t}\Gamma(r^*+t)/\Gamma(r^*)
\big \}
\]
which is well defined for $t > - r^*$.
Since the range of this function contains 0 as an interior point,
all the moments of $Y$ are finite.
Further, $\bbE^*[ \exp(Y)] = M_Y(1) < \infty$ so $\exp(Y)$ has finite expectation.

As a function of $y$, $g(y; r^*,G^*)$ has a finite upper bound.
Hence, $\bbE^*\{ \log{ g(y;r^*,G^*)} \} < \infty$.
In addition,
\ba
\log {g(Y; r^*,G^*)}
& \geq &
\log{\big \{\alpha_1^*g(Y;r^*,\theta_1^*)\big \}}\\
&=&
\log{\alpha_1^*} - \log{\Gamma(r^*)}+r^* Y -
r^*\log{\theta_1^*}-(\theta_1^{*})^{-1} \exp{(Y)}.
\ea
Clearly, every term on the right-hand side has a finite expectation.
Hence,  $\bbE^*\{ \log{ g(y;r^*,G^*)} \} > - \infty$.
This completes the proof.
\end{proof}

\begin{lemma}
\label{Glivenko-Cantelli}
In the setting of Lemma \ref{Expectation},
let $M=\sup_y {g(y; r^*,G^*)}$.

For any fixed positive number $\delta$, we have
\begin{equation}
\label{unifom1}
\sup_u\frac{1}{n}\sum_{i=1}^n\mathbb{I}(|y_i-u|<\epsilon)
<
2M\epsilon+\delta
\end{equation}
uniformly in $\epsilon$ almost surely.
\end{lemma}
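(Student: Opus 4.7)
The plan is to reduce the claim to the classical Glivenko--Cantelli theorem for the empirical distribution function of $y_1, \dots, y_n$, combined with the elementary upper bound that follows from $M := \sup_y g(y; r^*, G^*) < \infty$. The key observation is that for any $u$ and $\epsilon$,
\[
\frac{1}{n}\sum_{i=1}^n \mathbb{I}(|y_i - u| < \epsilon) = F_n(u + \epsilon) - F_n\bigl((u - \epsilon)^-\bigr),
\]
where $F_n$ is the empirical cdf of the $y_i$'s, while the population counterpart satisfies $F^*(u + \epsilon) - F^*(u - \epsilon) = \int_{u-\epsilon}^{u+\epsilon} g(y; r^*, G^*)\, dy \leq 2 M \epsilon$. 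Because the density is bounded, $F^*$ is continuous, so its left limits agree with the ordinary values and no boundary correction is needed.

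I would then carry out three steps. First, verify $M < \infty$: in the log parametrization \eqref{loggammadensity}, the exponent $r^*(y - \log\theta_j^*) - \exp(y - \log\theta_j^*)$ is maximized at a finite value, so each component density is bounded and so is their convex combination. Second, apply the classical Glivenko--Cantelli theorem to the IID sample $y_1, \dots, y_n$ to obtain $\Delta_n := \sup_v |F_n(v) - F^*(v)| \to 0$ almost surely. Third, on the almost sure event where $\Delta_n \to 0$, pick $N$ so that $2\Delta_n < \delta$ for all $n \geq N$; then for every $u$ and every $\epsilon > 0$,
\[
F_n(u + \epsilon) - F_n\bigl((u - \epsilon)^-\bigr) \leq F^*(u + \epsilon) - F^*(u - \epsilon) + 2 \Delta_n \leq 2 M \epsilon + \delta.
\]
Taking the supremum over $u$ delivers the stated inequality, holding simultaneously for every $\epsilon$.

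The attractive feature of this reduction is that uniformity in $\epsilon$ comes essentially for free: $\Delta_n$ is a single sample-dependent scalar that does not depend on $u$ or $\epsilon$, and the population bound $2M\epsilon$ already encodes the correct $\epsilon$-dependence. For this reason I do not expect a genuine obstacle in the argument. The only point that requires a modicum of care is the continuity of $F^*$, which is precisely where boundedness of the density is used; alternatively, one could control the empirical measure on the class of intervals $\{(u-\epsilon, u+\epsilon)\}$ directly via Vapnik--Chervonenkis bounds, but reducing to the standard Glivenko--Cantelli theorem for the cdf is by far the cleanest route.
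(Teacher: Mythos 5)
Your proposal is correct and follows essentially the same route as the paper: bound the empirical proportion of points in $(u-\epsilon,u+\epsilon)$ by an increment of the empirical cdf, invoke the classical Glivenko--Cantelli theorem to replace it (up to a uniform $\Delta_n \to 0$ that depends on neither $u$ nor $\epsilon$) by the population increment, and bound the latter by $2M\epsilon$ using the boundedness of the density. The only nit is that your displayed identity $\frac{1}{n}\sum_i \mathbb{I}(|y_i-u|<\epsilon) = F_n(u+\epsilon)-F_n\bigl((u-\epsilon)^-\bigr)$ is really a ``$\le$'' (the right side also counts observations exactly at $u\pm\epsilon$), but since the inequality goes in the needed direction this is harmless.
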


\begin{proof}
Let $F_n(y) =n^{-1}\sum_{i=1}^n\mathbb{I}(y_i\le y)$ be
the empirical distribution function and $F(\cdot)$ be the distribution function of
$Y$.
It can be seen that
\[
\begin{aligned}
&\sup_{u}|n^{-1}\sum_{i=1}^n\mathbb{I}(|y_i-u|<\epsilon)-\mathbb{P}(|Y-u|<\epsilon)|\\
&\le\sup_{u}|F_n(u+\epsilon)-F_n(u-\epsilon)-F(u+\epsilon)+F(u-\epsilon)|\\
&\le
2 \sup_{u}|F_n(u) -F(u)| \to 0
\end{aligned}
\]
almost surely, by the Glivenko--Cantelli theorem, as $n \to \infty$.
That is,
\[
n^{-1}\sum_{i=1}^n\mathbb{I}(|y_i-u|<\epsilon)
\to \mathbb{P}(|Y-u|<\epsilon)
\]
almost surely and uniformly for $u$.
Consequently, uniformly for all $u$ and any small $\delta>0$,
\[
\sup_{u}\frac{1}{n}\sum_{i=1}^n\mathbb{I}(|y_i-u|<\epsilon)
<
\mathbb{P}( |Y-u| < \epsilon )+\delta
\]
almost surely.
Because $\mathbb{P}(|Y-u|<\epsilon)\le 2M\epsilon$, we further have
\[
\sup_{u}
\frac{1}{n}\sum_{i=1}^n\mathbb{I}\left(|y_i-u|<\epsilon\right)
<2M\epsilon+\delta,
\]
almost surely.
This completes the proof.
\end{proof}

The above lemma slightly extends the Glivenko--Cantelli theorem.
For any interval of length $\epsilon$, the proportion of a random sample it
contains is nearly uniformly bounded by $O(\epsilon)$.

\subsection{Inequalities}
The following lemma \citep{Xinli2007} gives a useful property of the Gamma function.

\begin{lemma}
\label{Gamma.bound}
When $r >1$, we have
\[
    \frac{r^{r-\gamma}}{e^{r-1}} < \Gamma(r) < \frac{r^{r-1/2}}{e^{r-1}},
\]
where $\gamma=0.577215\cdots$ is the Euler--Mascheroni constant.
When $0 < r < 1$, the left inequality holds, but the right inequality is reversed.
\end{lemma}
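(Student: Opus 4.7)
The plan is to convert the two inequalities into logarithmic form and analyze them through the gap functions
\[
U(r) := \log\Gamma(r) - \bigl(r-\tfrac{1}{2}\bigr)\log r + (r-1), \qquad L(r) := \log\Gamma(r) - (r-\gamma)\log r + (r-1).
\]
The right inequality of the lemma is equivalent to $U(r) < 0$ on $(1,\infty)$ with the reverse on $(0,1)$, and the left inequality to $L(r) > 0$ on $(0,\infty)\setminus\{1\}$. A direct substitution gives $U(1) = L(1) = 0$, so both comparison curves pass through $(1, \log\Gamma(1)) = (1,0)$, and the task reduces to tracking the sign of $U$ and $L$ on either side of $r = 1$.

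For the upper bound I would use Binet's second integral formula for the digamma function, $\psi(s) = \log s - 1/(2s) - 2I(s)$, where $I(s) := \int_0^\infty t/[(s^2+t^2)(e^{2\pi t}-1)]\,dt > 0$. Integrating from $1$ to $r$ and using $\log\Gamma(1) = 0$ yields the exact identity
\[
\log\Gamma(r) = \bigl(r-\tfrac{1}{2}\bigr)\log r - (r-1) - 2\int_1^r I(s)\,ds,
\]
so $U(r) = -2\int_1^r I(s)\,ds$. Positivity of $I$ immediately gives $U(r) < 0$ for $r > 1$ and $U(r) > 0$ for $0 < r < 1$, establishing the right inequality in a single line.

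For the lower bound, the same identity rewrites $L(r) = (\gamma - 1/2)\log r - 2\int_1^r I(s)\,ds$. Using the evaluation $I(1) = (\gamma - 1/2)/2$---which follows from Binet's identity at $s = 1$ combined with $\psi(1) = -\gamma$---one checks that $L'(1) = 0$ in addition to $L(1) = 0$; a short computation then gives $L''(1) = \pi^2/6 - 1 - \gamma > 0$, so $r = 1$ is a strict local minimum of $L$. To upgrade this to a global statement I would show that $r = 1$ is the unique critical point of $L$ on $(0,\infty)$, i.e., that $L'(r) = (\gamma - 1/2)/r - 2I(r)$ has the same sign as $r - 1$. The natural route is to interchange the order of integration in the double integral, obtaining the closed form
\[
L(r) = (\gamma - \tfrac{1}{2})\log r - 2\int_0^\infty\frac{\arctan\bigl((r-1)t/(t^2 + r)\bigr)}{e^{2\pi t}-1}\,dt,
\]
and to verify directly that the right-hand side is strictly positive for every $r \neq 1$.

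The main obstacle is the lower bound. Because both $L(1)$ and $L'(1)$ vanish, the comparison is sharp to second order at $r = 1$, so the one-line monotonicity argument that handled the upper bound is no longer available---we cannot afford to lose any multiplicative constant. The delicate step is to track the sign of $L'$ (or, equivalently, of the arctangent integral) globally on each side of $r = 1$, trading the logarithmic ``budget'' $(\gamma - 1/2)\log r$ against an integral whose derivative at $r = 1$ exactly matches. My expectation is that this requires a two-step comparison: first establish the inequality on a neighborhood of $r = 1$ via the Taylor expansion indicated by $L''(1) > 0$, then extend it to the full range using monotonicity of $I$ and the asymptotic $\psi(r) - \log r \sim -1/(2r)$ as $r \to \infty$.
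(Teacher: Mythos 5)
The paper itself does not prove this lemma---it is quoted from \cite{Xinli2007}---so the only question is whether your argument is complete. Your treatment of the \emph{upper} bound is correct and complete: with Binet's second formula $\psi(s)=\log s-\tfrac{1}{2s}-2I(s)$, $I(s)>0$, integration from $1$ to $r$ gives exactly $U(r)=-2\int_1^r I(s)\,ds$, which has the sign of $1-r$, and this yields the right inequality for $r>1$ and its reversal on $(0,1)$. Your reductions for the lower bound are also correct as far as they go: $L(r)=(\gamma-\tfrac12)\log r-2\int_1^r I(s)\,ds$, $L(1)=L'(1)=0$ (using $2I(1)=\gamma-\tfrac12$), and $L''(1)=\pi^2/6-1-\gamma>0$, and the interchange of integrals giving the arctangent form is legitimate.

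The genuine gap is the global step for the lower bound, which you only announce (``verify directly that the right-hand side is strictly positive'', ``my expectation is that this requires a two-step comparison''). What you must show is that $L'(r)=(\gamma-\tfrac12)/r-2I(r)$ has the sign of $r-1$, i.e.\ that $2rI(r)<\gamma-\tfrac12=2I(1)$ for $r>1$ and $2rI(r)>2I(1)$ for $0<r<1$. The tool you propose---monotonicity of $I$ itself---cannot deliver this: for $r>1$, $I(r)<I(1)$ gives $2I(r)<\gamma-\tfrac12$, but the extra factor $r>1$ works against you, and symmetrically for $r<1$; likewise the asymptotic $\psi(r)-\log r\sim-1/(2r)$ only controls the behaviour at infinity, not the whole range, and a local Taylor argument near $r=1$ plus endpoint behaviour does not exclude sign changes in between. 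What closes the argument is monotonicity of $r\mapsto rI(r)$ rather than of $I$: substituting $t=ru$ gives
\[
rI(r)=\int_0^\infty\frac{ru}{(1+u^2)\,(e^{2\pi r u}-1)}\,du,
\]
and for each fixed $u>0$ the map $r\mapsto r/(e^{2\pi r u}-1)$ is strictly decreasing, since $\frac{d}{dx}\bigl(e^{x}(1-x)\bigr)=-xe^{x}<0$ shows $e^{x}(1-x)<1$ for $x>0$. Hence $rI(r)$ is strictly decreasing, $L'$ has the sign of $r-1$, $L$ attains its strict global minimum $0$ at $r=1$, and the left inequality follows for all $r\neq1$. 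Without this (or an equivalent global monotonicity/positivity argument), the lower-bound half of the lemma remains unproved.
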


The following lemma gives altered versions
of two results from \cite{Chen2016}. We will not repeat the settings.

\begin{lemma}
\label{lemma.bound1}
(a)
For any $r>0$ and $\theta>0$, the density function of  $Y$ satisfies
\be
  \label{lemma.bound1.a}
\log{g(y;r,\theta)}\le \gamma\log{r}
\ee
where $\gamma < 1$ is the Euler--Mascheroni constant.

(b)
Let $\epsilon_r=\sqrt{2}\log{r}/\sqrt{r}$.
When $r>20$ and $|y-\log{(r\theta)}|>\epsilon_r$, we have
\be
  \label{lemma.bound1.b}
\log{g(y; r, \theta)} \leq \gamma( \log{r}-\log^2{r}).
\ee
\end{lemma}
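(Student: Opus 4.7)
The plan is to reduce both parts of the lemma to a one-variable bound on a convex function. Let $w := y - \log(r\theta)$; then a direct calculation from \eqref{loggammadensity} gives
\[
\log g(y; r, \theta) = -\log\Gamma(r) + r\log r - r - r\phi(w), \qquad \phi(w) := e^w - 1 - w \ge 0,
\]
so $\log g$ attains its maximum at $w = 0$ and the shortfall below the peak is precisely $r\phi(w)$. The left inequality of Lemma \ref{Gamma.bound}, which is valid for all $r > 0$, gives $\log\Gamma(r) \ge (r - \gamma)\log r - (r - 1)$, so
\[
\log g(y; r, \theta) \le \gamma \log r - 1 - r\phi(w).
\]
Since $\phi \ge 0$, part (a) follows immediately.

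For part (b), the above envelope reduces the target $\log g \le \gamma \log r - \gamma \log^2 r$ to showing $r\phi(w) \ge \gamma \log^2 r - 1$ for all $|w| \ge \epsilon_r = \sqrt{2}\,\log r/\sqrt r$. Because $\phi$ is convex with minimum $0$ at $w = 0$, and $\phi(x) - \phi(-x) = 2(\sinh x - x) \ge 0$ for $x \ge 0$, the minimum of $\phi$ over $\{|w| \ge \epsilon_r\}$ is $\phi(-\epsilon_r)$. I will establish the one-sided Taylor bound $\phi(-x) \ge x^2/2 - x^3/6$ for all $x \ge 0$ by setting $h(x) := \phi(-x) - x^2/2 + x^3/6$ and noting that $h(0) = h'(0) = h''(0) = h'''(0) = 0$ while $h^{(4)}(x) = e^{-x} > 0$, which by iterated integration forces $h \ge 0$ on $[0, \infty)$. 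Substituting $x = \epsilon_r$,
\[
r\phi(-\epsilon_r) \ge r\bigl(\epsilon_r^2/2 - \epsilon_r^3/6\bigr) = \log^2 r \left(1 - \frac{\sqrt{2}\,\log r}{3\sqrt r}\right).
\]
The function $\log r/\sqrt r$ is decreasing on $(e^2, \infty)$, so for $r \ge 20$ the parenthesized factor exceeds $1 - \sqrt{2}\log 20/(3\sqrt{20}) > 0.68$, which is strictly larger than the Euler--Mascheroni constant $\gamma \approx 0.577$. Hence $r\phi(w) \ge r\phi(-\epsilon_r) > \gamma \log^2 r \ge \gamma \log^2 r - 1$, as required.

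The main obstacle is the negative-$w$ regime: the clean bound $\phi(w) \ge w^2/2$ fails for $w < 0$, so the cubic correction must be carried, and the threshold $r \ge 20$ is the price paid for keeping the resulting leading constant strictly above $\gamma$. The calibration $\epsilon_r = \sqrt 2\,\log r/\sqrt r$ is exactly what makes $r\epsilon_r^2/2 = \log^2 r$, matching the $\log^2 r$ scale on the right-hand side of (b).
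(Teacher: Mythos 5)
Your proof is correct and follows essentially the same route as the paper: the same decomposition $\log g(y;r,\theta)=-\log\Gamma(r)+r\log r+rw-re^{w}$ with $w=y-\log(r\theta)$, the same lower bound on $\log\Gamma(r)$ from Lemma \ref{Gamma.bound}, and then a lower bound on the shortfall term $r(e^{w}-w-1)$ when $|w|\ge\epsilon_r$. The only difference is the elementary inequality used at the last step: the paper bounds $e^{t}-t-1\ge t_0^{2}/3$ for $|t|\ge t_0\in(0,1)$ (which is where $r>20$ guarantees $\epsilon_r<1$), while you locate the minimum at $w=-\epsilon_r$ and use the cubic-corrected Taylor bound $e^{-x}-1+x\ge x^{2}/2-x^{3}/6$, with $r\ge 20$ keeping the resulting constant above $\gamma$; both yield the same conclusion.
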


\begin{proof}
It can easily be seen that
 \[
\log{g(y; r, \theta)}
= - \log{\Gamma(r)}+r (y-\log{\theta})-\exp{(y-\log{\theta})}.
\]
As a function of $y$, the above function attains its maximum at $y=\log{(r\theta)}$.
Hence,
\[
\log{g(y;r,\theta)} \le \log{g(\log{(r\theta)} ; r, \theta)}
=
-\log{\Gamma(r)} + r\log{r} - r.
\]
Recall Lemma \ref{Gamma.bound}, which gives
a lower bound on the log gamma function:
\be
\label{log.gamma.bound}
    \log{\Gamma(r)} \ge (r -\gamma) \log{r} - r + 1
\ee
for any $r>0$.
Applying this bound to the upper bound of $\log{g(y; r,\theta)}$, we have
\[
    \log{g(y; r, \theta)}\le \gamma \log{r}.
\]
This proves \eqref{lemma.bound1.a}.

Next, we prove \eqref{lemma.bound1.b} through a side result.
Let $t_0$ be any positive constant between 0 and 1.
We have
\begin{equation}\label{h(t)}
 h(t) = \exp{(t)}-t-1 \geq (1/3) t_0^2
\end{equation}
for $|t| \ge t_0$.
By expanding $h(t)$ to the quadratic term, we can see
that the inequality is true when $t > 0$.
Note that $h(t)$ decreases monotonically when $t \in (-\infty, 0)$.
Hence, the inequality is true if $h(- t_0) \geq (1/3) t_0^2$ for any $t_0 \in  (0, 1)$.
For any $t \in (0, 1)$, applying Taylor's expansion gives
\[
h(-t) - (1/3) t^2  = (1/6)(t^2 - \tilde t^3) \geq 0
\]
where $\tilde t \in (0, t)$.
This completes the proof that $h(t) \geq (1/3) t_0^2$.

A slight rearrangement of the terms gives
\[
\log{g(y;r, \theta)}
= - \log{\Gamma(r)}+r \log{r}+r(y-\log{(r\theta)}) - r\exp{(y-\log{(r\theta)})}.
\]
Applying  bound \eqref{log.gamma.bound}
to the log gamma function again, we have
\be
\label{loggy}
    \log{g(y; r, \theta)}\le \gamma\log{r}-1-r h(y - \log (r\theta)).
\ee
Let $t=y-\log{(r\theta)}$ and $t_0 = \epsilon_r=\sqrt{2}\log{r}/\sqrt{r}$.
The restriction $r>20$ implies that $0< t_0 <1$.
Applying the inequality for $h(t)$ with this $t_0$ we obtain
\ba
\log{g(y; r,\theta)}
&\leq \gamma \log{r}-1-r h(t)\\
&\leq \gamma \log{r}- \frac{2}{3}\log^2{r}\\
&\leq \gamma (\log{r}-\log^2{r}),
\ea
for all $r > 20$.
This completes the proof of \eqref{lemma.bound1.b}
and the proof of the lemma.
\end{proof}

The two inequalities in this lemma give us information about the log density function.
First, although this function is bounded in $y$, the upper bound can
be arbitrarily large when $r$ is very large. Hence, when the parameter space for $r$
has a finite upper bound, the density functions in terms of $g(y; r, \theta)$
have a uniform upper bound. This trivially implies the consistency of the MLE
of $G$ when a valid upper bound is placed on $r$.
Second, the density function peaks at $y = \log (r\theta)$,
but its value decays quickly at a quadratic rate in $\log r$ as
$y$ diverges from this value.

\section{Range of the MLE of the structural shape parameter $r$}
\label{sec3}
Suppose we have an IID sample  $y_1, y_2, \ldots, y_n$
from a finite log Gamma mixture distribution specified by \eqref{general.mixture}
in which $r$ is structural and $G \in \mathbb{G}_m$ for specified $m$.
The log likelihood function of the mixing distribution $G$ is given by
\begin{equation}
\label{3.1.likelihood}
\ell_n(r, G) = \sum_{i=1}^n \log{g(y_i; r, G)}.
\end{equation}
The MLE of $( r, G)$ is some $(\hat r, \hat G)$ such that
\begin{equation}
\label{definition.of.MLE.3.1.likelihood}
\ell_n(\hat r, \hat G)
=
\sup\{\ell_n(r, G): ( r, G) \in \mathbb{R}^+ \times \mathbb{G}_m\}.
\end{equation}
We have implicitly assumed that the MLE is the maximum
point of the likelihood among the mixing distributions with at most $m$
distinct support points.

When the shape parameter $r$ is confined in a compact finite
interval, we can easily show that the constrained MLE is consistent.
Hence, if feasible, a way to prove the consistency of the MLE is to show
that $\tau \leq \hat r \leq \Delta$ almost surely for some positive
constants $\tau$ and $\Delta$, where $\hat r$ is
the MLE of the structural shape parameter.
This strategy was used by \cite{chen2003tests} in the context of the finite normal mixture model;
for the finite Gamma mixture model the proof
is more complicated. In this section and the next, we will:
\begin{itemize}
\item[(a)]
show that there exist a sufficiently small positive constant $\tau > 0$ and
a sufficiently large positive constant $\Delta > 0$ such that
$\tau \leq \hat r \leq \Delta$ almost surely;
\item[(b)]
verify that the sufficient conditions presented in \cite{chen2017}
are satisfied by the finite Gamma mixture model with a reduced parameter
space $[\tau, \Delta] \times \mathbb{G}_m$ for any
$0 < \tau < \Delta < \infty$.
\end{itemize}

\begin{lemma}
\label{lemma.shape}
Assume that we have a set of IID observations from the finite Gamma mixture distribution.
Further, assume that the true mixing distribution is given by
\(
G^* = \sum_{j=1}^{m^*} \alpha_j^* \{\theta_j^*\}
\)
for some $\alpha^*_j > 0$ and the true structural parameter value $r^*$,
the distinct support points $\theta^*_j$, and $m^* \leq m$.
Let $(\hat r, \hat{G})$ be a global maximum point of
the likelihood function $\ell_n(r, G)$ over $\mathbb{R}^+ \times \mathbb{G}_m$.
There exist a sufficiently small constant $\tau>0$ and a sufficiently large
constant $\Delta$ such that as $n \to \infty$,
$\{\tau \le \hat{r} \le \Delta\}$ almost surely.
\end{lemma}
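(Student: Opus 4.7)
The plan is to exploit $\ell_n(\hat r, \hat G) \geq \ell_n(r^*, G^*)$ and push the question to the behaviour of $\sup_{G \in \mathbb{G}_m} \ell_n(r, G)$ at the two ends of the $r$-axis. By Lemma~\ref{Expectation}, $c := \bbE^*\{\log g(Y; r^*, G^*)\}$ is finite, so the strong law gives $n^{-1}\ell_n(r^*, G^*) \to c$ almost surely. It therefore suffices to exhibit $\tau$ and $\Delta$ such that $n^{-1}\sup_{G \in \mathbb{G}_m}\ell_n(r, G) < c - 1$ almost surely for every sufficiently large $n$ whenever $r < \tau$ or $r > \Delta$; this directly contradicts the MLE inequality and forces $\hat r \in [\tau, \Delta]$.

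The lower bound on $\hat r$ is short. Lemma~\ref{lemma.bound1}(a) gives $g(y; r, \theta) \leq r^\gamma$ independently of $y$ and $\theta$; since this pointwise bound does not involve $\theta$, it survives convex combinations, so $\log g(y; r, G) \leq \gamma \log r$ for every $G \in \mathbb{G}_m$. Hence $n^{-1}\ell_n(r, G) \leq \gamma \log r$, and choosing $\tau$ small enough that $\gamma \log \tau < c - 1$ closes this side: if $\hat r < \tau$, then $n^{-1}\ell_n(\hat r, \hat G) < c-1$, contradicting the convergence $n^{-1}\ell_n(r^*, G^*) \to c$.

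The upper bound on $\hat r$ is the main obstacle, because $\gamma \log r$ grows without limit and the crude bound alone cannot exclude large $\hat r$. The remedy is the sharper Lemma~\ref{lemma.bound1}(b): for $r > 20$ and $y$ outside the $\epsilon_r$-neighbourhood of the peak $\log(r\theta)$, $\log g(y; r, \theta) \leq \gamma(\log r - \log^2 r)$. Consequently, if $y$ lies outside the $\epsilon_r$-neighbourhood of \emph{every} support point of $G$, then the same bound passes to the mixture $g(y; r, G)$. Let $p_n(G)$ denote the empirical fraction of the sample landing in the union of the $m$ intervals of radius $\epsilon_r$ centred at $\log(r\theta_j)$; since Lemma~\ref{Glivenko-Cantelli} controls $\sup_u n^{-1}\sum_i \mathbb{I}(|y_i - u| < \epsilon_r)$ uniformly in $u$, a union bound over the $m$ centres yields $p_n(G) \leq m(2M\epsilon_r + \delta)$ simultaneously for every $G \in \mathbb{G}_m$, for any preassigned $\delta > 0$, almost surely. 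Splitting the sum along this decomposition,
\[
n^{-1}\ell_n(r, G) \leq p_n(G)\gamma\log r + (1 - p_n(G))\gamma(\log r - \log^2 r) = \gamma \log r - (1 - p_n(G))\gamma\log^2 r.
\]
Picking $\delta < 1/(2m)$ and observing $\epsilon_r \to 0$ as $r \to \infty$, for all $r$ large enough $1 - p_n(G) > 1/2$, so the right-hand side diverges to $-\infty$ uniformly in $G$. A sufficiently large $\Delta$ then delivers the contradiction on the upper side.

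The crucial hinge is that the $m$ peaks $\log(r\theta_j)$ move with $G$, yet we must control $\sup_G \ell_n(r, G)$. This is delivered by the uniformity-in-$u$ statement of Lemma~\ref{Glivenko-Cantelli}, which bounds the sample mass of every $\epsilon_r$-interval simultaneously, together with the quadratic-in-$\log r$ decay of Lemma~\ref{lemma.bound1}(b), which easily swamps the linear growth $\gamma \log r$ as soon as a bounded-away-from-zero fraction of sample points avoids the peaks. With both bounds in hand, $\hat r$ is confined to the compact interval $[\tau, \Delta]$ almost surely.
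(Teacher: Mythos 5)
Your argument is correct and follows essentially the same route as the paper: the lower bound via Lemma \ref{lemma.bound1}(a) and the strong law applied to $\ell_n(r^*,G^*)$, and the upper bound by splitting the sample according to whether $y_i$ falls within $\epsilon_r$ of one of the $m$ peaks $\log(r\theta_j)$, using Lemma \ref{lemma.bound1}(b) off the peaks and the uniform bound of Lemma \ref{Glivenko-Cantelli} on the peak mass. Your quantity $p_n(G)$ with $\delta < 1/(2m)$ is just a repackaging of the paper's inequality \eqref{q9}, so no further comparison is needed.
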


\begin{proof}
Recall that $g(y; r, G)$ is the density function of $Y = \log X$, which is a finite
mixture of log gamma distributions.
By inequality \eqref{lemma.bound1.a} in Lemma \ref{lemma.bound1},
we have $\sup_y g(y; r, G) \leq \gamma \log r$.
Hence,
\[
\sup_{0 <r<\tau} \ell_n(r, G)  < n \gamma \log{\tau}
\]
for any positive and small constant $\tau$.

In Lemma \ref{Expectation}, where the
distribution of $Y$ was assumed to be $g(y; r^*, G^*)$, we showed that
$\bbE^*\{ \log{g(Y; r^*, G^*)}\}$ is finite.
By the strong law of large numbers, we have
\[
\ell_n(r^*, G^*)= n \bbE^* \{\log{g(Y; r^*, G^*)}\} +o(n).
\]
Hence,
\[
\sup_{0<r<\tau} \ell_n(r,G) - \ell_n(r^*, G^*)
<
n\big \{\gamma\log{\tau} -\bbE^* [\log{g(Y; r^*, G^*) }] \big \}+o(n).
\]
Clearly,
$\gamma \log{\tau} - \bbE^*  \{ \log{g(Y; r^*, G^*)}\} < 0$
when $\tau$ is sufficiently small.
Therefore,
\[
n\big \{\gamma\log{\tau} -\bbE^* \log{g(Y; r^*, G^*)}\big \}+o(n)
< 0
\]
almost surely.
Hence,
\[
\sup_{0<r<\tau} {\ell_n(r, G)}-\ell_n(r^*,G^*) < 0
\]
almost surely. In other words,
\[
{\ell_n(r, G)}<\ell_n(r^*, G^*)
\]
uniformly for $r < \tau$ almost surely.
Based on this, we
conclude that almost surely, the MLE of $r$ will not be in the region
$(0, \tau)$.
This proves the first inequality of the lemma.

We now show that $\{\hat{r}\le \Delta\}$ almost surely for sufficiently large $\Delta$.
We first observe that
\[
g(y; r, G)
=
\sum_{j=1}^m \alpha_j g(y; r, \theta_j)
\le
\max_{1\le j \le m} g(y; r, \theta_j).
\]
Recall (see \eqref{lemma.bound1.b}) that for any $G$ and $r$,
when $r>20$ and $|y-\log{(r\theta_{j})}| >  \sqrt{2}\log{r}/\sqrt{r}$,
we have
\[
\log g(y; r, \theta) \le \gamma (\log{r}-\log^2{r}),
\]
and the upper bound is very small when $r$ is large.
For $y$ such that $|y-\log{(r\theta_{j})}|>  \sqrt{2}\log{r}/\sqrt{r}$
for all $j$ in $1, 2, \ldots, m$, we have
\be
\label{eq4.2}
\log{g(y; r, G)}
\le
\log{\{\max_{1\le j\le m}g(y; r, \theta_j)\}}\le\gamma(\log{r}-\log^2{r}).
\ee
For convenience, let  $\mu_j = \log{(r\theta_{j})}$
and $\epsilon_r =  \sqrt{2}\log{r}/\sqrt{r}$.
For $r >20$, applying \eqref{lemma.bound1.a}
and \eqref{eq4.2}, we have
\ba
\log \{g(y; r,G)\}
&\le &
\gamma\log{r} - \{ \gamma\log^2{r}\} \ind (\min_{1\le j\le m} |y-\mu_j|\ge\epsilon_r)\\
&= &
\gamma\log{r}- \gamma\log^2{r}+\{ \gamma\log^2{r}\} \ind (\min_{1\le j\le m}|y-\mu_j|<\epsilon_r)
\\
&\le &
\gamma \big  \{
\log{r}-\log^2{r} + \log^2{r}\sum_{j=1}^m \ind (|y-\mu_j|<\epsilon_r)
\big \},
\ea
where $\ind (\cdot)$ is the indicator function.

Applying this inequality to $y_1, y_2, \ldots, y_n$ in the likelihood function, we obtain
\bea
\label{q9}
\ell_n(r, G)
& = &
\sum_{i=1}^n \log \{g(y_i; r,G)\}
\nonumber \\
& \le &
n\gamma\log{r}-n\gamma \log^2{r} +\gamma\log^2{r}\sum_{j=1}^m
 \Big \{ \sup_{\mu_j} \sum_{i=1}^n\mathbb{I}(|y_i-\mu_j|<\epsilon_r)\Big \}
 \nonumber \\
 &\le &
 n \gamma \big ( \log{r} - \log^2{r} \big )
+ m  n \gamma  \log^2{r}
 \sup_{u}
\Big \{ n^{-1} \sum_{i=1}^n\mathbb{I} \big (|y_i-u|<\epsilon_r \big ) \Big \}.
\eea
By the upper bound in Lemma \ref{Glivenko-Cantelli}, we have
\[
 \sup_{u}
 \Big \{ n^{-1} \sum_{i=1}^n\mathbb{I} \big (|y_i-u|<\epsilon_r \big )
 \Big \}
< 2M^* \epsilon_r +  \delta
\]
almost surely for any fixed $\delta > 0$, where $M^*$ is the upper bound
on the true density function, which is finite.
Furthermore, by choosing a sufficiently large $\Delta$,
we can ensure that $2M^*\epsilon_r < \delta$ uniformly over $r > \Delta$.
Hence,
\[
 \sup_{u}
 \Big \{ n^{-1} \sum_{i=1}^n\mathbb{I} \big (|y_i-u|<\epsilon_r \big )
 \Big \}
< 2 \delta
\]
almost surely for any $\delta > 0$.
By choosing a sufficiently small $\delta$ and the corresponding $\Delta$,
we have that uniformly over $r > \Delta$,
\[
\ell_n(r, G)
\leq
n\gamma \{ \log r - (1- m \delta) \log^2 r \}
< n \bbE^*\{ \log g(Y;r^*, G^* )\}.
\]
Further, because
\[
\ell_n(r^*,G^* ) = n [ \bbE^*\{ \log g(Y; r^*,G^* )\} + o(1)] ,
\]
we obtain
\[
\sup_{r > \Delta} \ell_n(r,G)  < \ell_n(r^*, G^*)
\]
almost surely. This is the second inequality of the lemma, and
this completes the proof.
\end{proof}

\section{Consistency of the MLE for a restricted structural shape parameter}
\label{sec4}
We have accomplished task (a):
 the MLE of the structural shape parameter is almost surely in a finite interval
$[\tau, \Delta] \subset {\mathbb R}^+$.
The consistency problem for the finite Gamma mixture model
with structural $r$ has therefore been reduced to the problem
where the parameter space of $(r, G)$ is  $[\tau, \Delta] \times \mathbb{G}_m$.

To conveniently discuss the consistency of the  MLE $(\hat r, \hat G)$,
we introduce a distance \citep{KW1956} on the parameter space of $(r, G)$.
For any shape parameter values $r_1, r_2$ and mixing distributions $G_1, G_2$, let
\begin{equation}
\label{DKW}
D_{KW}((r_1, G_1),  (r_2, G_2))
=
|\arctan( r_1) - \arctan (r_2) |
+ \int_{ \mathbb{R}^+} |G_1(\theta)-G_2(\theta)| \exp(-\theta)\,d\theta.
\end{equation}
It can be seen that
\[
D_{KW}((r_1, G_1),  (r_2, G_2))
\leq
\pi + \int_{ \mathbb{R}^+} \exp(-|\theta|)\,d\theta
= \pi + 1
\]
for any $(r_1, G_1)$ and $(r_2, G_2)$.
That is, the space of mixing distributions is totally bounded.
It is important to note that a totally bounded and closed space is compact.

Another important property is that $D_{KW}((r_k, G_k), (r^*, G^*)) \to 0$
if and only if $r_k \to r^*$ and $G_k \to G^*$ in distribution/measure.
Hence, the consistency of the MLE $(\hat r, \hat G)$
can be conveniently interpreted as $D_{KW}((\hat r, \hat G), (r^*, G^*)) \to 0$
almost surely as the sample size $n \to \infty$.
We now state our main result.

\begin{theorem}
\label{theorem1}
Assume that we have a set of IID observations $y_1, \ldots, y_n$ from \eqref{general.mixture}
where $G^* = \sum_{j=1}^{m^*} \alpha_j^* \{\theta_j^*\}$ with $\alpha^*_j > 0$
and $m^* \leq m$.
Then the MLE of $(r, G)$ defined in \eqref{definition.of.MLE.3.1.likelihood},
$(\hat r, \hat{G})$, is a consistent estimator.
That is, almost surely as $n \to \infty$
\[
D_{KW}((\hat r, \hat{G}), (r^*, G^*)) \to  0.
\]
\end{theorem}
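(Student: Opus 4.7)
The plan is to carry out task (b) of Section \ref{sec3}: combine Lemma \ref{lemma.shape} with a verification of the sufficient conditions for mixing-distribution MLE consistency in the Kiefer--Wolfowitz framework, following \cite{KW1956} and \cite{chen2017}. Since Lemma \ref{lemma.shape} confines $\hat r$ to $[\tau, \Delta]$ almost surely, it suffices to prove consistency of $(\hat r, \hat G)$ when the parameter space is restricted to $\Pi = [\tau, \Delta] \times \mathbb{G}_m$. This set is compact under $D_{KW}$, by the remark following \eqref{DKW}. On $\Pi$ I would verify the standard regularity conditions: compactness, joint continuity of $g(y; r, G)$ in $(r, G)$ with respect to $D_{KW}$, an integrable envelope for $\log g$, and identifiability.

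Three of these are essentially immediate. Compactness is already in place. Continuity of $g(y; r, G)$ in $(r, G)$ with respect to $D_{KW}$ follows from the smooth expression \eqref{loggammadensity} together with bounded convergence. The integrable envelope is supplied directly by Lemma \ref{lemma.bound1}(a): on $\Pi$ we have $\log g(y; r, G) \leq \gamma \log \Delta$, a finite constant, while the negative part is controlled because $\bbE^*|\log g(Y; r^*, G^*)| < \infty$ by Lemma \ref{Expectation}. Identifiability of finite two-parameter Gamma mixtures is classical (it follows from uniqueness of moment generating functions), and the structural-shape case is a direct specialization.

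With these ingredients in hand, the theorem follows from the standard covering argument. Fix $\epsilon > 0$ and let $K_\epsilon = \{(r, G) \in \Pi : D_{KW}((r, G), (r^*, G^*)) \geq \epsilon\}$, which is $D_{KW}$-compact. For each $(r_0, G_0) \in K_\epsilon$, Jensen's inequality and identifiability give the strict information bound
\[
\bbE^*\{\log g(Y; r_0, G_0)\} < \bbE^*\{\log g(Y; r^*, G^*)\},
\]
with both sides finite by Lemma \ref{Expectation}. Using continuity and the envelope, one shrinks to a $D_{KW}$-neighborhood $U(r_0, G_0)$ on which the same strict inequality still holds with $\log g(Y; r_0, G_0)$ replaced by the local supremum $\sup_{(r, G) \in U(r_0, G_0)} \log g(Y; r, G)$. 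Extracting a finite subcover $U_1, \ldots, U_K$ of $K_\epsilon$ and applying the strong law of large numbers to each of these $K$ local suprema and to $\log g(Y; r^*, G^*)$ yields
\[
\sup_{(r, G) \in K_\epsilon} \ell_n(r, G) < \ell_n(r^*, G^*) \leq \ell_n(\hat r, \hat G)
\]
almost surely for all large $n$, so $(\hat r, \hat G) \notin K_\epsilon$ eventually. Letting $\epsilon$ run through a countable sequence tending to $0$ gives $D_{KW}((\hat r, \hat G), (r^*, G^*)) \to 0$ almost surely.

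The main technical obstacle is the neighborhood-shrinking step: one needs $\bbE^* \sup_{(r, G) \in U}\log g(Y; r, G) \to \bbE^*\log g(Y; r_0, G_0)$ as the diameter of $U$ shrinks to zero, together with measurability of the local supremum. Measurability follows from the separability of $\Pi$ under $D_{KW}$ combined with the joint continuity of $g$, while the limit is handled by dominated convergence using the uniform envelope $\gamma \log \Delta$ from Lemma \ref{lemma.bound1}(a). All the genuinely model-specific analytical work has thus been done in Sections \ref{sec2}--\ref{sec3}; what remains is a systematic specialization of the Kiefer--Wolfowitz argument to $\Pi$.
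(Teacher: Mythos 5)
Your overall route (restrict $\hat r$ to $[\tau,\Delta]$ via Lemma \ref{lemma.shape}, then run a Wald/Kiefer--Wolfowitz argument on the restricted space) is the same as the paper's, but there is a genuine gap at the very first structural step: the claim that $\Pi=[\tau,\Delta]\times\mathbb{G}_m$ ``is compact under $D_{KW}$, by the remark following \eqref{DKW}.'' That remark only establishes \emph{total boundedness}; the paper is explicit that one additionally needs the space to be closed. In fact $\mathbb{G}_m$ is not closed under $D_{KW}$: take $G_k=\{\theta_k\}$ with $\theta_k\to\infty$, so that $G_k(\theta)\to 0$ for every $\theta$ and the $D_{KW}$-limit is the zero subdistribution, or take $\theta_k\to 0$, whose limit puts its mass at the boundary point $0\notin\mathbb{R}^+$. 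Neither limit lies in $\mathbb{G}_m$, so your set $K_\epsilon$ is not compact and the finite-subcover step of the covering argument cannot be carried out as stated. Nothing in Sections \ref{sec2}--\ref{sec3} rules out support points of $\hat G$ drifting to $0$ or $\infty$ (Lemma \ref{lemma.shape} controls only $\hat r$), so this is not a cosmetic omission: it is precisely the degenerate-limit phenomenon that any consistency proof for mixtures must confront.

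The paper closes this gap by compactifying before verifying the conditions: the scale-parameter space is extended to $[0,\infty]$ with $g(y;r,0)=g(y;r,\infty)=0$ (legitimate because $g(y;r,\theta)\to 0$ as $\theta\to 0^+$ or $\theta\to\infty$ for fixed $r\in[\tau,\Delta]$), the class $\mathbb{G}_m$ is enlarged to $\bar{\mathbb{G}}=\{\rho G:\ G\in\mathbb{G}_m,\ \rho\in[0,1]\}$ so that Helly's selection theorem guarantees every sequence has a $D_{KW}$-convergent subsequence with limit in $\bar{\mathbb{G}}$, and continuity of $(r,\bar G)\mapsto g(y;r,\bar G)$ plus the Kullback--Leibler condition (using the envelope $\gamma\log\Delta$ from Lemma \ref{lemma.bound1}(a), as you do) are checked on $[\tau,\Delta]\times\bar{\mathbb{G}}$; finally one observes that the maximizer cannot be a proper subdistribution since $\ell_n(\hat r,\rho\tilde G)<\ell_n(\hat r,\tilde G)$ for $\rho<1$. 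To repair your proposal you must either insert this compactification (and then your covering argument, or equivalently the conditions A1--A4 of \cite{chen2017}, goes through, since the strict information inequality at the added boundary points is automatic because the defective densities there are pointwise smaller), or else prove separately that the support points of $\hat G$ stay in a fixed compact subset of $(0,\infty)$ almost surely --- a fact that is currently unsupported. The remaining ingredients of your write-up (envelope, strict Jensen inequality via identifiability, monotone/dominated convergence for the shrinking neighborhoods, SLLN) are standard and correct.
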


The proof will be presented in the next two subsections.

\subsection{Four conditions}
There is a rich literature on the consistency of the MLE under mixture models.
According to \cite{chen2017}, there are three ways to
establish the consistency of the MLE: the approaches of \cite{KW1956}, \cite{Redner1981},
and \cite{Pfanzagl1988}. They give
similar but not completely equivalent results.
Our proof follows the approach of \cite{KW1956}.

To avoid confusion in notation between the specific finite Gamma mixture
and the general mixture model, we use $f(x; \psi)$ for
the density function of the component distribution for the general mixture.
A general mixture model has the density function
\be
\label{mixture.general}
f(x; G) = \int_\Psi f(x; \psi)\,dG(\psi)
\ee
for $G \in \mathbb{G}$.
Note that the finite mixture model is a special case where
$ \mathbb{G}$ is reduced to  $\mathbb{G}_m$.
The parameter space of $\psi$ is $\Psi \subset \mathbb{R}^d$
for some positive integer $d$.

\cite{KW1956} observed that under some conditions,
the consistency of the MLE under a mixture model based on
IID observations is reduced to the general consistency problem
discussed in \cite{Wald1949}.
The conditions for consistency given in
\cite{KW1956} are detailed but hard to comprehend.
\cite{chen2017} streamlined their conditions and
replaced them by the following four high-level conditions
applicable to \eqref{mixture.general}:

\begin{itemize}
\item[A1]
Identifiability: Let $F(x; G)$ be the cumulative distribution function of $f(x; G)$.
The mixture model is identifiable, i.e.,
\[
F(x; G_1)=F(x; G_2)
\]
for all $x$ implies $G_1=G_2$.

\item[A2]
Finite Kullback--Leibler Information: Let the true mixing distribution be $G^*$
and for any subset $B$ of mixing distributions, define
\[
f(x; B)=\sup_{G \in B} f(x; G).
\]
Let
$B_\epsilon(G)=\{G': D_{KW}(G, G')<\epsilon\}$ be an open ball of radius $\epsilon$
centered at $G$.
For any $G \ne G^*$, there exists an $\epsilon>0 $ such that
\[
\bbE^*\big [ \log{\{f(X; B_\epsilon(G))/f(X; G^*)\}}\big ]^+ <\infty.
\]
The expectation operator $\bbE^*(\cdot)$ is taken over $f(x; G^*)$.

\item[A3]
Continuity: The component parameter space $\Psi$ is a closed set.
For all $x$ and any given $G_0$, we have
\[
\lim_{G \to G_0} f(x; G) = f(x; G_0).
\]

\item[A4]
Compactness: The definition of the mixture density $f(x; G)$ in $\mathbb{G}$
can be continuously extended to a compact space $\bar{\mathbb{G}}$
while retaining the validity of Condition A2.
\end{itemize}

The above sufficient conditions are applicable
to the nonparametric MLE $\hat G_n$ for general mixture models.
They are equally applicable to finite mixture models when $\mathbb{G}$
is reduced to $\mathbb{G}_m$ and $G^* \in \mathbb{G}_m$.
If we regard the shape parameter $r$ as part of the mixing, then
the mixing distribution degenerates in this aspect,
but the conclusion remains applicable.
To prove Theorem \ref{theorem1} we may show that
Conditions A1--A4 are satisfied.

We have already shown that $\hat r$ is almost surely part of
$[\tau, \Delta] \subset {\mathbb R}^+$. In Section~\ref{conditions-reduced}
we show that Conditions A1--A4 are satisfied for the finite Gamma mixture model
with structural shape parameter $r$ and parameter space
$[\tau, \Delta] \subset {\mathbb R}^+$.

\subsection{Conditions A1--A4 on the reduced parameter space}\label{conditions-reduced}

The finite Gamma mixture model with structural shape parameter $r$
is a special model: its bivariate mixing distribution degenerates
in its structural elements $r$. The identifiability of this model is implied by
the identifiability of the general finite Gamma mixture.
\cite{Henry1963} gave a set of sufficient conditions for the identifiability of
finite mixture models, and
the finite mixture of the two-parameter Gamma model satisfies
these conditions. Hence, identifiability holds, and
Condition A1 is verified.

Conditions A2 and A3 are part of A4:
verifying A4 verifies A2 and A3 at the same time.
The first task is to extend the component parameter space of $\theta$
from $\mathbb{R^+}$ to its closure $[0, \infty]$.
For any given $r \in [\tau, \Delta]$, it can easily be seen that
\[
\lim_{\theta \to 0+} g(y; r, \theta)
=
\lim_{\theta \to \infty}g(y; r, \theta)
=0
\]
for all $y$.
Therefore, for any given $r \in [\tau, \Delta]$, we can extend the
subpopulation parameter space of $\theta$ to $[0, \infty]$
by defining
\[
g(y; r, 0) = g(y; r, \infty) =0
\]
for all $y \in \mathbb{R}$.
With this extension, the density function $g(y; r, \theta)$
remains continuous in $r$ and $\theta$ on $[\tau, \Delta] \times [0, \infty]$.

Next, let
\[
\bar{\mathbb{G}} = \{ \rho G: G \in \mathbb{G}_m, \rho \in [0, 1] \}
\]
so that for any $r \in [\tau, \Delta]$ and $\bar G \in \bar{\mathbb{G}}$, we define
\[
f(y; r, \bar G)  = \int_0^\infty g(y; r, \theta) d \bar G
=
\rho  \int_0^\infty g(y; r, \theta) dG
= \rho f(y; r, G).
\]
For the usual case of $\bar G \in \mathbb{G}_m$, $\rho = 1$.
By Helly's selection theorem \citep{vanderVaart2000},
each sequence of probability measures $G_k: k=1, 2, \ldots$ has a converging subsequence
with a limit $\bar G$ in the terms of $D_{KW}( G_k, \bar G) \to 0$ allowing
$\bar G \in \bar{\mathbb{G}}$.
Equipped with the distance $D_{KW}(\cdot, \cdot)$, $\bar{\mathbb{G}}$
is compact.
Moreover, the density function $f(y; r, \bar G)$ is continuous. Hence,
Condition A3 is satisfied.

For each $y$, and given $r \in [\delta, \Delta]$,
we note that $g(y; r, \theta)$ has a finite upper bound in view of its
expression given in \eqref{loggammadensity}.
By one version of the definition of convergence in measure \citep{vanderVaart2000},
we have
\[
g(y; r, G_k) = \int_0^\infty g(y; r, \theta) d G_k
\to
\int_0^\infty g(y; r, \theta) d \bar G
\]
as $G_k \to \bar G$ in distribution.
This verifies Condition A3 after $\mathbb{G}_m$ is compactified.

Now we are ready to verify that Condition A2 is also satisfied on the space of
$(r, G) \in[\tau,\Delta] \times \bar{\mathbb{G}}_m$.
By \eqref{lemma.bound1.a}, we have
$\log{ g(y; r,G) } \le \gamma \log \Delta$ when $r \in [ \tau, \Delta ]$.
Hence, for any $(r, G) \in [ \tau, \Delta ]  \times  \bar{\mathbb{G}}_m$
and any constant $\epsilon>0$, let
\[
B_{\epsilon}(r, G) = \{(r',G'): D_{KW}\big( (r,G), (r',G') \big ) <\epsilon\}
\]
be an open ball of radius $\epsilon$ centered at $(r, G)$.
We have
\[
g( y; B_{\epsilon}(r, G) )
=
\sup_{ (r',G') \in B_{\epsilon}(r, G)}  g(y; r', G' ) \leq \gamma \log \Delta.
\]
Since $\bbE^*\log \{ g(Y; r^*, G^*) \} $ is finite, we get
\[
\bbE^* \log \{ g(Y; B_{\epsilon}(r,G)) / g(Y; r^*,G^*) \}
\leq
\gamma\log\Delta- \bbE^*\log \{  g(Y;r^*, G^*) \} <\infty,
\]
where the expectation $\bbE^*(\cdot)$ is taken
assuming that $Y$ has the true distribution specified by $g(y; r^*, G^*)$.
Therefore,  we have
\[
 \bbE^*[ \log{\{  g(Y; B_{\epsilon}(r,G)) / g(Y; r^*,G^*) \}} ]^+ <\infty,
\]
for all $(r,G)\in[\tau, \Delta]\times \bar{\mathbb{G}}_m$
such that $(r,G)\neq(r^*,G^*)$.
Thus, Condition A2 is also satisfied on the compactified space
$[ \tau, \Delta ]  \times  \bar{\mathbb{G}}_m$.

We have now shown that Conditions A1--A4 are satisfied.
Therefore, the MLE over the space of $r \in [\tau, \Delta]$
and $G \in  \mathbb{G}_m$ is strongly consistent.
This completes the proof.

One minor remark concerns $(\hat r, \hat{G})$ when it is the
maximum point of $\ell_n(r, G)$ over the compactified space
$\mathbb{R}^+\times \bar{\mathbb{G}}_m$.
It appears that $\hat G$ could be a subdistribution with a total
probability below 1, i.e., $\hat G = \rho \tilde G$ for
some $\rho \in (0, 1)$ and  $\tilde G \in {\mathbb{G}}_m$.
This is impossible because it would give
\[
\ell_n(\hat r, \hat G)
=
\ell_n(\hat r, \rho \tilde G)
<
\ell_n(\hat r, \tilde G)
\]
implying that $(\hat r, \hat G) $ is not the MLE.
We need subdistributions to
use the compact property in the proof.

\section{Numerical computations and simulation experiments}
\label{sec5}

\subsection{EM algorithm}
The EM algorithm is the most popular numerical approach for finding
the MLE of the mixing distribution.
The algorithm is iterative: it updates the initial mixing distribution $G^{(0)}$
proposed by the user to obtain $G^{(k)}: k=1, 2, \ldots$.
It is well known that $\ell_n(G^{(k)})$ is monotonic, which leads to
the convergence property.
The properties of the EM algorithm for finite mixtures
have been thoroughly discussed in \cite{Wu1983}.
We follow \cite{Chen2016} and use a slightly
adapted EM algorithm in our simulation experiments.

In a finite mixture model, each observation $x_i$ may be regarded
as part of the complete observations $(x_i, z_i)$ on the $i$th sample unit.
In this setup, $z_i$ is an unknown latent variable;
given $z_i = j$, $x_i$ is a sample from the $j$th subpopulation,
$j=1, 2, \cdots, m$.
Hence, the  complete-data log-likelihood for the
finite Gamma mixture model with a structural $r$ is given by
\[
\sum_{j=1}^m \sum_{i=1}^n
\ind (z_i=j) \{\log{\alpha_j}+\log{f(x_i; r, \theta_j)}\}.
\]
To improve the finite-sample performance, one may modify the log-likelihood
function by adding an $O_p(1)$ term without altering the consistency conclusion.
We use
\[
\ell_c (r, G)
=
\sum_{j=1}^m \sum_{i=1}^n
\ind (z_i=j) \{\log{\alpha_j}+\log{f(x_i; r, \theta_j)}\}
+ \epsilon  \sum_{j=1}^m \log{\alpha_j}
\]
for some $\epsilon > 0$.
In our simulation and real-data experiments, we choose $\epsilon = 0.001$.
This strategy was first employed in the modified likelihood approach of
\cite{Chen1998} and has been widely adopted, e.g., \cite{Chen2016}.
Since the $z_i$'s are missing, one cannot estimate $(r, G)$ by the maximum
point of $\ell_c (r, G)$.  To overcome this obstacle,
the EM algorithm replaces $\ind (z_i = j)$
by its expected value.
We now discuss the two steps of the algorithm.

\textbf{E-step.}
Given a shape parameter $r^{(0)}$ and a mixing distribution $G^{(0)} \in \mathbb{G}_m$,
we find the conditional expectation of $ \ind(z_i=j) $ given the data:
\[
w_{ij}^{(0)}
= \bbE^{(0)} \{ \ind(z_i=j) | x_1, \ldots, x_n \}
= \frac{ \alpha_j^{(0)}  f(x_i; r^{(0)},  \theta_j^{(0)}) }
{ \sum_{k=1}^m  \alpha_k^{(0)}  f(x_i; r^{(0)},\theta_k^{(0)}) }.
\]
Replacing $ \ind(z_i=j)$ by its conditional expectation given above, we obtain
\[
Q(r, G; r^{(0)}, G^{(0)})
=\sum_{j=1}^m \sum_{i=1}^n
w_{ij}^{(0)} \{\log{ \alpha_j + \log{f(x_i; r, \theta_j)} } \}
+
 \epsilon  \sum_{j=1}^m\log{\alpha_j}.
\]

\textbf{M-step.}
In this step, we maximize $Q(r, G; r^{(0)}, G^{(0)})$ with respect to
$r \in \mathbb{R}^+$ and $G \in \mathbb{G}_m$.
For $j=1, \ldots, m$, denote
\ba
\bar{w}_{j}^{(0)}&= &n^{-1} \sum_{i=1}^n w_{ij}^{(0)},\\
\bar{x}_{j}^{(0)} & = & \{n\bar{w}_{j}^{(0)}\}^{-1} \sum_{i=1}^n w_{ij}^{(0)}x_i,\\
\bar{y}_{j}^{(0)} & = &  \{n\bar{w}_{j}^{(0)}\}^{-1} \sum_{i=1}^n w_{ij}^{(0)}\log{x_i}.
\ea
We then get the expression
\bea
Q(r, G; r^{(0)}, G^{(0)})
&=&
n\sum_{j=1}^m
\big \{\bar{w}_j^{(0)} \big [ (r-1) \bar{y}_j^{(0)}
- (\bar{x}_j^{(0)}/\theta_j) -
    \log{\Gamma(r)}-r\log{\theta_j}\big ] \big \}
    \nonumber \\
 && +
\sum_{j=1}^m \{(n\bar{w}_j^{(0)}+\epsilon)\log{\alpha_j}\}.
    \label{objectfunction}
\eea
Note that the component parameters in  $Q(r, G; r^{(0)}, G^{(0)})$
are well separated.
Maximizing $Q(r, G; r^{(0)}, G^{(0)})$ with respect to $\alpha_j$
gives
\[
\alpha_j^{(1)} =( n\bar{w}_j^{(0)} + \epsilon)/(n + m\epsilon).
\]
The extra positive constant $\epsilon$ makes the above
iteration step numerically stable.

For each fixed $r$,  $Q(r, G; r^{(0)}, G^{(0)})$ is maximized with respect to $\theta_j$
when
\[
 \theta_j^{(1)} = {\bar{x}_j^{(0)}}/{r}.
\]
Replacing $ \theta_j$ by $ \theta_j^{(1)}$ in $Q(r, G; r^{(0)}, G^{(0)})$,
we find that the maximization solution in $r$ is given by
\[
 r^{(1)}
 =
 \mathop{\arg\max}_{r}
 \Big \{
 \sum_{j=1}^m
\big [ \bar{w}_j^{(0)}(\bar{y}_{j}^{(0)} - \log\bar{x}_{j}^{(0)}) \big ] r
+
\big [
r\log{r} - \log{\Gamma(r)}- r
\big ]
\Big \}.
\]
This is a single-variable function that can easily be solved.
Once $ r^{(1)}$ is obtained,  the updated mixing distribution is given by
\[
G^{(1)}=\sum_{j=1}^m \alpha_j^{(1)}\{\theta_{j}^{(1)}\}.
\]

Starting from the initial value $(r^{(0)}, G^{(0)})$,  the E- and M-steps give us $(r^{(1)}, G^{(1)})$.
Repeating these two steps, we get a sequence $(r^{(k)},G^{(k)}), k =1, 2, \ldots$.
The slightly modified log likelihood function has its value increased after each iteration.
We terminate the algorithm when the modified log likelihood value stabilizes;
in the simulations, we set the tolerance to $10^{-6}$.

\subsection{Simulation experiments}
We conducted simulation experiments to illustrate the consistency
properties of the MLE when $r$ is structural under a finite Gamma mixture model.
We generated data from the six Gamma mixture distributions specified in the following table.
Model I contains three mixtures of order $m=2$, and
Model II contains three mixtures of order $m=3$.
We selected distinct subpopulation scale parameter values for both models.

\begin{table}[h]
\begin{tabular}{ccc}
Model & Density function & $r$ \\ \hline
I & $ 0.4 f(x; r, 0.5)+0.6f(x; r, 5)$  & $0.5, 5, 50$\\
II & $  0.35 f(x; r, 0.5) + 0.55 f(x;  r,  2) + 0.1 f(x; r,  6)$ & $0.5, 10, 30$\\
\hline
\end{tabular}
\end{table}

From both models, we generated samples of sizes 60, 240, 960, and 3,840.
For each random sample, we used 30 sets of initial values to drive the EM algorithm.
The first 11 sets contain the true values of the model that generated
the sample and 10 randomly and mildly perturbed true values.
The remaining 19 sets are randomly generated, and
they can be quite different from the true value.
With these 30 initial values, up to 30 local
maxima are found for each random sample;
we took the  MLE to be the one with the highest $\ell_n$ value.
We performed $K=1,000$ repetitions for each mixture distribution
and sample-size combination.

We found the root mean square error (RMSE) for each parameter in these
six mixtures.
Let $\hat \psi$ be a generic parameter estimator and $\psi^*$ be the true value
of the corresponding parameter in the selected distribution.
The root mean square error (RMSE) of $\psi$ is
\[
 \mbox{RMSE}(\psi)
 =
 \Big \{
 K^{-1} \sum_{k=1}^K (\hat{\psi}^{(k)} - \psi^*)^2
 \Big \}^{1/2}.
\]
In this expression, we use the superscript $(k)$ for the estimate based on the $k$th sample,
and $\psi$ is generic notation for either the mixing proportions $\alpha_j$, the subpopulation
scale $\theta_j$, or the structural shape parameter $r$.
The simulation results are presented in Tables \ref{SimuResult1} and \ref{SimuResult2}.

We kept a record of the initial values (perturbed or random)
that led to the highest modified likelihood values.
Let $K_0$ be the number of times in the $1,000$ repetitions that
the highest values were obtained from the perturbed values.
We computed $\eta=K_0/K$ and report it in the last
column of Tables \ref{SimuResult1} and \ref{SimuResult2}.
The simulation results lead to the following two observations:

\begin{enumerate}
\item
As shown in the first column of the tables, for each $r$,
we increase the sample size in multiples of four from 60 to 3,840.
The increase often halves the RMSE of a parameter.
Overall, the RMSEs decrease markedly as the sample size increases.
These observations support the theoretical consistency conclusion.

There is one exception: $r=0.5$ under Model II.
Table~\ref{SimuResult2} shows that when the sample size increases from $60$ to $240$,
the RMSE for $\theta_3$ increases slightly.
This unexpected outcome may be attributed to the slow action of the asymptotic
when $r < 1$ and to the random nature of the simulation experiment.
Note that in this case the density function goes to infinity when $x$ approaches $0$.

\item
When $r = 0.5$, $\eta$ is usually below 50\%,
but it increases with the sample size.
This also indicates that in this case the asymptotic requires a
large sample.
If $r$ is large, the MLE is near the true parameter value
so it is usually located when the EM algorithm starts from the perturbed values.
\end{enumerate}

\begin{table}[H]
    \centering
    \caption{RMSEs of MLE based on data generated from Model I}
    \label{SimuResult1}
    \begin{tabular}{crcccccc}
    \\[-5mm]
    \toprule
    & $n$& $\alpha_1$ & $\alpha_2$  &  $r$   & $\theta_1$ & $\theta_2$  & $\eta$  \\
        \hline
    \multirow{4}{*}{$r$=0.5}
   &60&0.210 & 0.210 & 0.153 & 0.511 & 3.210 & 0.361 \\
    &240&0.113 & 0.113 & 0.063 & 0.321 & 1.296  & 0.413\\
    &960&0.050 & 0.050 & 0.028 & 0.120 & 0.455  & 0.431\\
    &3,840&0.025&0.025&0.014 & 0.059 & 0.216  & 0.453\\

    \hline
    \multirow{4}{*}{$r$=5}
   &60&0.067 & 0.067 & 1.083 & 0.104 & 0.982  & 0.822\\
   &240&0.033 & 0.033 & 0.491 & 0.053 & 0.501  & 0.792\\
   &960&0.016 & 0.016 & 0.228 & 0.026 & 0.240  & 0.821\\
   &3,840&0.008 & 0.008 & 0.116 & 0.013 & 0.123  & 0.865\\
 \hline
    \multirow{4}{*}{$r$=50}
    &60&0.063 & 0.063 & 10.74 & 0.091 & 0.910  & 1.000\\
    &240&0.031 & 0.031 & 4.690 & 0.045 & 0.454  & 1.000\\
    &960&0.016 & 0.016 & 2.283 & 0.023 & 0.228  & 1.000\\
    &3,840&0.008 & 0.008 & 1.164 & 0.012 & 0.116  & 1.000 \\
       \bottomrule
    \end{tabular}
    \end{table}

\begin{table}[H]
    \centering
    \caption{RMSEs of MLE based on data generated from Model II}
    \label{SimuResult2}
    \begin{tabular}{rrcccccccc}
    \\[-5mm]
    \toprule
    & $n$& $\alpha_1$ & $\alpha_2$  & $\alpha_3$ & $r$   & $\theta_1$ & $\theta_2$  &$\theta_3$  & $\eta$ \\

\hline
    \multirow{4}{*}{$r$=0.5}
   &60&0.251 & 0.285 & 0.384 & 0.387 & 0.444 & 2.742 & 5.188  &0.288\\
    &240&0.246 & 0.210 & 0.272 & 0.121 & 0.412 & 1.405 & 5.195  & 0.405\\
    &960&0.209 & 0.134 & 0.186 & 0.042 & 0.316 & 0.875 & 3.785  & 0.237\\
    &3,840&0.123&0.080&0.088&	0.016&	0.169&	0.502&	2.211&	0.578 \\

    \hline
    \multirow{4}{*}{$r$=10}
    &60&0.065 & 0.070 & 0.046 & 2.670 & 0.118 & 0.465 & 1.694  & 0.848\\
    &240&0.031 & 0.035 & 0.022 & 1.106 & 0.056 & 0.225 & 0.778  & 0.874 \\
    &960&0.017 & 0.017 & 0.011 & 0.519 & 0.027 & 0.108 & 0.377  & 0.910\\
   &3,840&0.008 & 0.009 & 0.005 & 0.261 & 0.014 & 0.055 & 0.193  & 0.890\\
\hline
    \multirow{4}{*}{$r$=30}

   &60&0.060 & 0.062 & 0.040 & 6.708 & 0.093 & 0.369 & 1.211 &  0.910\\
   &240&0.030 & 0.032 & 0.019 & 2.885 & 0.047 & 0.186 & 0.599  & 0.899\\
   &960&0.016 & 0.016 & 0.010 & 1.363 & 0.023 & 0.091 & 0.288  & 0.891  \\
   &3,840&0.007 & 0.008 & 0.005 & 0.693 & 0.012 & 0.046 & 0.147  & 0.890\\
       \bottomrule
    \end{tabular}
\end{table}

\section{Data example: Disposable income}
\label{sec6}

Finite Gamma mixture distributions are often used to model, for example,
insurance payments, household incomes, and the cost of medicine:
see \cite{liu2003testing}, \cite{wong2014test}, \cite{willmot2011risk}, and \cite{yin2019consistency}.
In this section, we illustrate the use of the finite Gamma mixture distribution
with a structural parameter for data on disposable income and expenditure.
We obtained the data
from the China Institute For Income Distribution \citep{CHIP13}.
They were collected in the fifth-wave survey in July and August 2014.
The CHIP13 data contain many attributes;
we analyze only the household income and expenditure.

The data set contains 17,244 records for disposable income, but
85 of them are either missing or nonpositive.
We exclude these records and fit a finite mixture model to the
remaining 17,159 observations. Table \ref{Summary.realdata} gives summary
statistics, and we make three remarks below:

\begin{enumerate}
\item
The maximum income  is about 35 times the mean income, and
the mean is much larger than the  median. Both features reflect the uneven
wealth distribution.

\item
Slightly over 10\% of the households have an income that is 50\%
above the 75th percentile.
The data are seriously skewed to the right.

\item
The majority of households (over 98\%) have an annual income
below 196 thousand yuan.
For the top $2\%$ the range is $[195, 959]$ thousand yuan.
\end{enumerate}
These characteristics suggest that the population
can be segregated into several homogeneous subpopulations.
A finite Gamma mixture distribution with a structural shape parameter
is a reasonable choice.

\begin{table}[H]
    \caption{Summary statistics for household income (in thousand yuan) }
\label{Summary.realdata}
\begin{center}
\begin{tabular}{ccccc}
    \toprule
   Minimum& 25th percentile &Median&75th percentile&Maximum\\
   0.0407&24.4149&42.4569&70.7991&1958.9434\\
   \hline
   Sample size&Mean&Standard deviation&Skewness&Kurtosis\\
   17,159&55.7077&53.5846&7.7611&187.3505\\
\bottomrule
\end{tabular}
\end{center}
\end{table}

\begin{figure}[H]
\centering
{
\includegraphics[width=14cm]{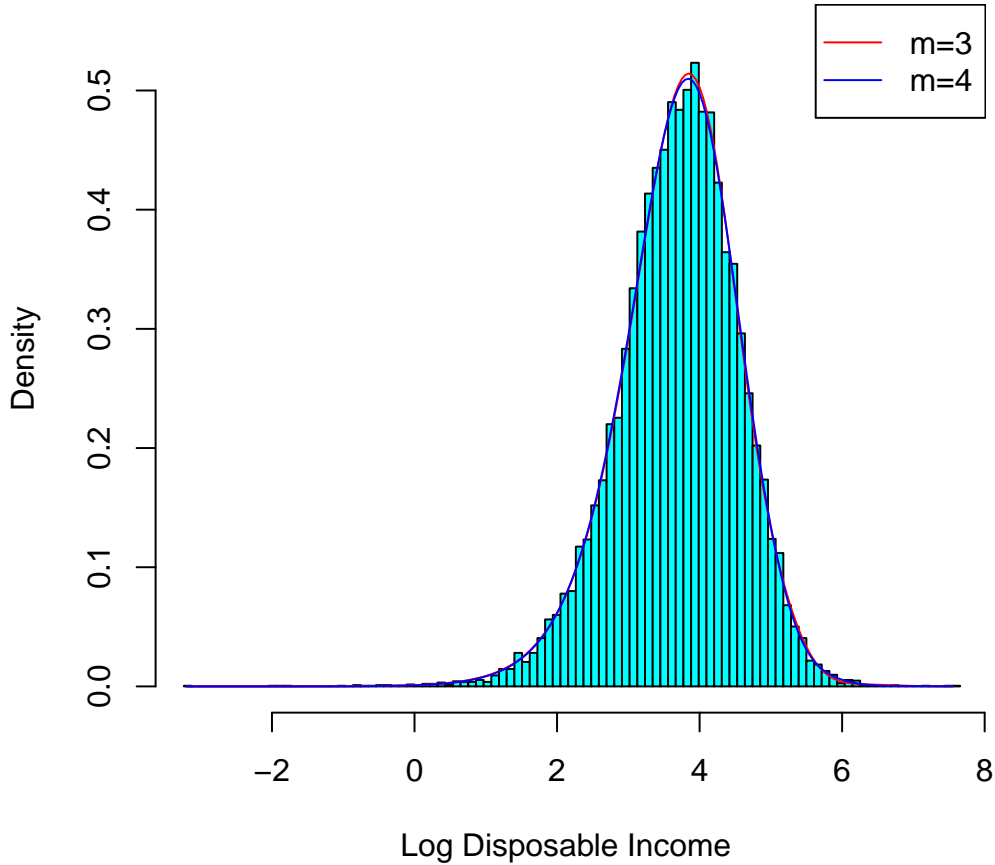}}
\hspace{0in}
\caption{Histogram of log disposable income
and density function of fitted log Gamma mixtures with order $m=3$ and $m=4$.}
\label{hist.and.fitted}
\end{figure}

We fit a set of  finite Gamma mixture models with a structural shape parameter $r$
and order $m=1, 2, \dots, 7$.
When $m$ is greater than 2, we first used 50 random initial values
to drive the EM algorithm and obtained up to 50 local maxima of the likelihood function.
We took the local maximum with the highest likelihood value as the tentative MLE.
Next, we created 10 initial values by perturbing the tentative MLE
and generated a further 19 randomly.
When the EM iteration stopped, we selected the estimate with the highest $\ell_n$ value
as the MLE.
For numerical stability, we adopted
the modified likelihood with $\epsilon = 0.001$.
Table \ref{MLE.realdata} gives the MLEs and corresponding likelihood values.
By the nature of the maximum likelihood,
$\ell_n (\hat G)$ increases as the order $m$ of the mixture models
increases. The size of the increment stalls at $m=7$.

Figure \ref{Q-Q_plot_of_disposable_income_with_various_order_m}
gives a QQ-plot for the fitted Gamma mixture models (log transformed),
together with the 45-degree line.
The data points nearly perfectly align with
the straight line when $m=3$ and $4$.
There is practically no room for further improvement by increasing $m$.

With a structural shape parameter $r$, the number of parameters
in the finite Gamma mixture model does not increase as quickly with $m$.
Although the mixture model is not regular, the Bayes information
criterion still provides some guidance.  For the current sample size,
a higher order of mixture model will be recommended  when the log-likelihood
is increased by 40.
With this general guidance, a finite Gamma mixture of order $m=3$
is recommended; $m=4$ is also acceptable.

The fitted finite Gamma mixture distribution of order $m=3$ suggests that about
75\% of the households have a low mean annual disposable income of $19,000$
yuan. A small percentage, 0.3\%, of the households have 10
times this value.
Setting $m=4$ changes the picture of
the low and medium-income households only slightly.
However, it separates a much smaller percentage, 0.04\%, of
super-rich households. They have nearly 30 times the
mean income of the low-income households.

\begin{table}[H]
\centering
\caption{MLEs of Gamma mixtures for disposable income}
\label{MLE.realdata}
\begin{tabular}{cccccccc}
\\[-5mm]
\toprule
$m$ &$1$ &$2$&$3$&$4$&$5$&$6$&$7$\\
\hline
$\hat{r}$&1.6945&2.0261&2.1841&2.2433 	&	2.3293 	&	2.3526 	&	2.4391 	\\
\hline
$\hat\alpha_1$&&0.9129&0.7436&0.6009 	&0.0008 	&0.0002 	&0.0002 	\\
$\hat\alpha_2$&	&0.0871&0.2531&	0.3740 	&	0.5486 	&	0.0008 	&	0.0009 	\\
$\hat\alpha_3$&	&&	0.0033 	&	0.0247 	&	0.4214 	&	0.5363 	&	0.0094 	\\
$\hat\alpha_4$&&&	&	0.0004 	&	0.0288 	&	0.4323 	&	0.5311 	\\
$\hat\alpha_5$&&&		&		&	0.0004 	&	0.0300 	&	0.4280 	\\
$\hat\alpha_6$&&		&		&		&		&	0.0004 	&	0.0301 	\\
$\hat\alpha_7$&	&	&		&		&		&		&	0.0004 	\\
\hline
$\hat\theta_1$&32.876&23.744&19.290&17.189&0.2408&0.0526 	&	0.0509 	\\
$\hat\theta_2$&&66.805&41.620&33.102&15.714&	0.4046 	&	0.4037 	\\
$\hat\theta_3$&&&190.82&78.326&30.811 	&	15.353 	&	5.0650 	\\
$\hat\theta_4$&&&&484.69 &	73.740 	&	30.271 	&	14.804 	\\
$\hat\theta_5$&&		&		&		&	466.97 	&	72.579 	&	29.496 	\\
$\hat\theta_6$&	&		&		&		&		&	462.15 	&	70.756 	\\
$\hat\theta_7$&&		&		&		&		&		&	449.30 	\\
$\ell_n$&-84,911&-84,542&-84,487 &	-84,478 &-84,468&-84,467&-84,467 	\\
\bottomrule
    \end{tabular}
\end{table}

\begin{figure}[H]
\centering
{
\includegraphics[width=6cm]{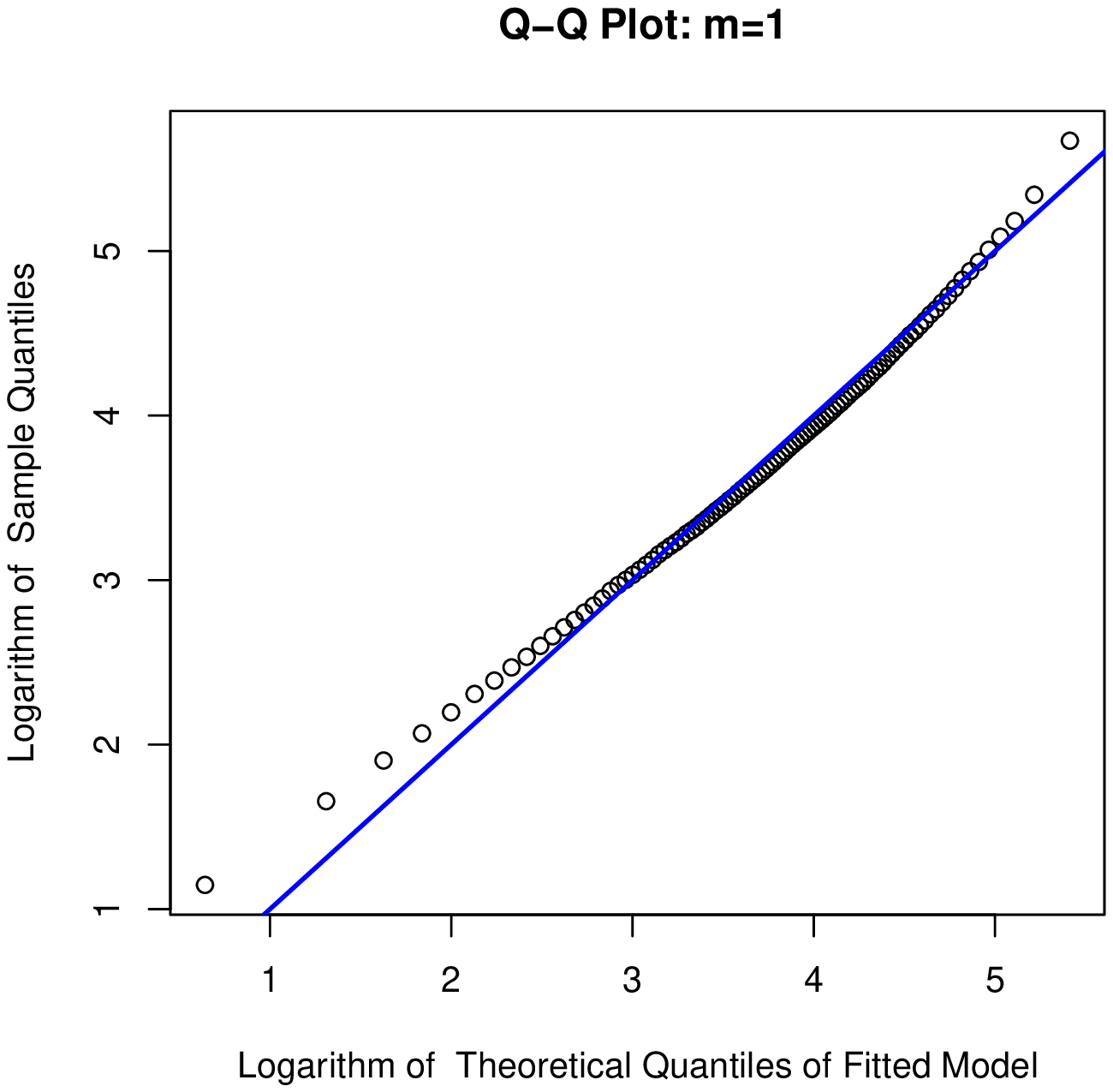}}
\hspace{0in}
{
\includegraphics[width=6cm]{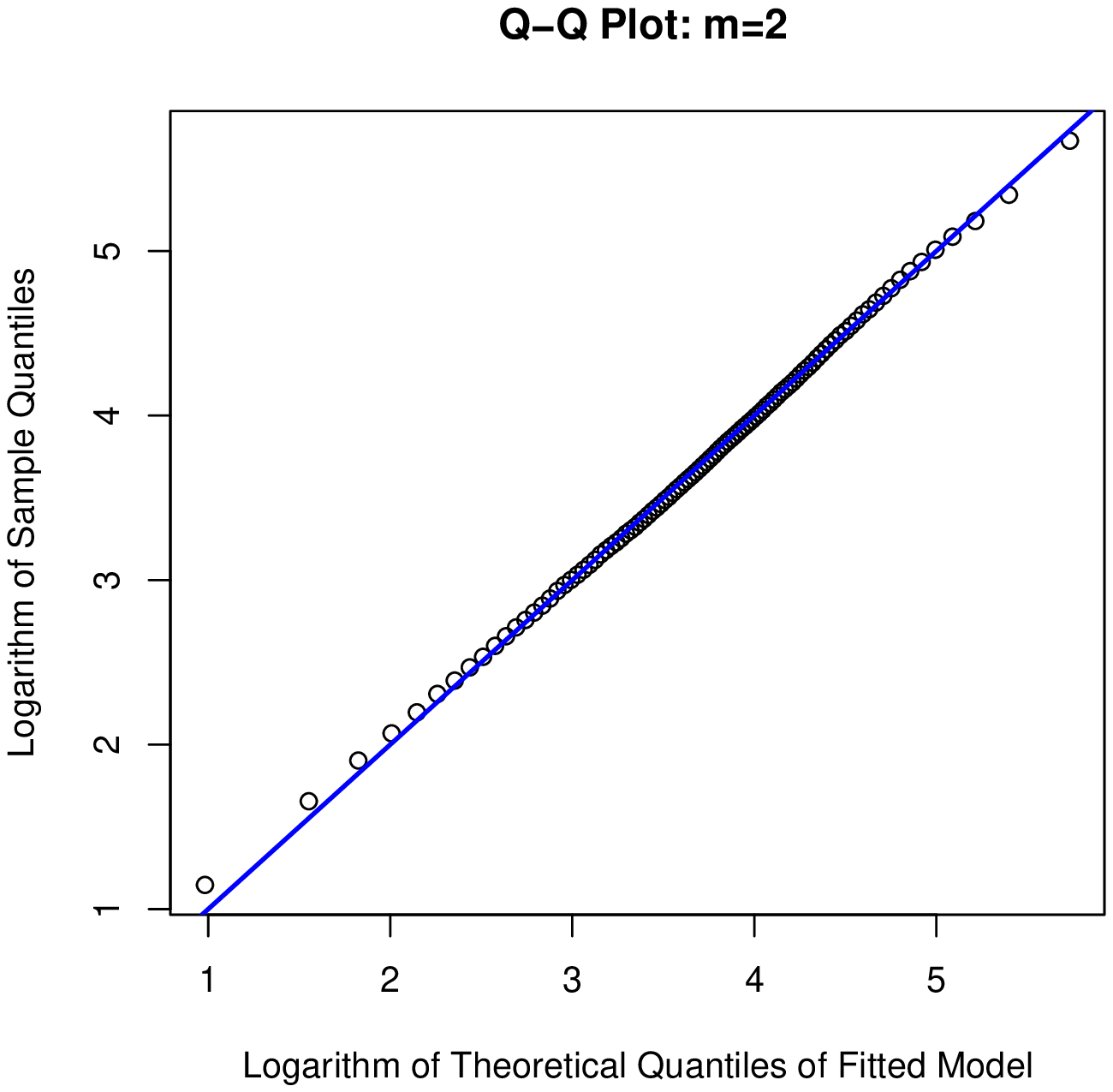}}
\hspace{0in}
{
\includegraphics[width=6cm]{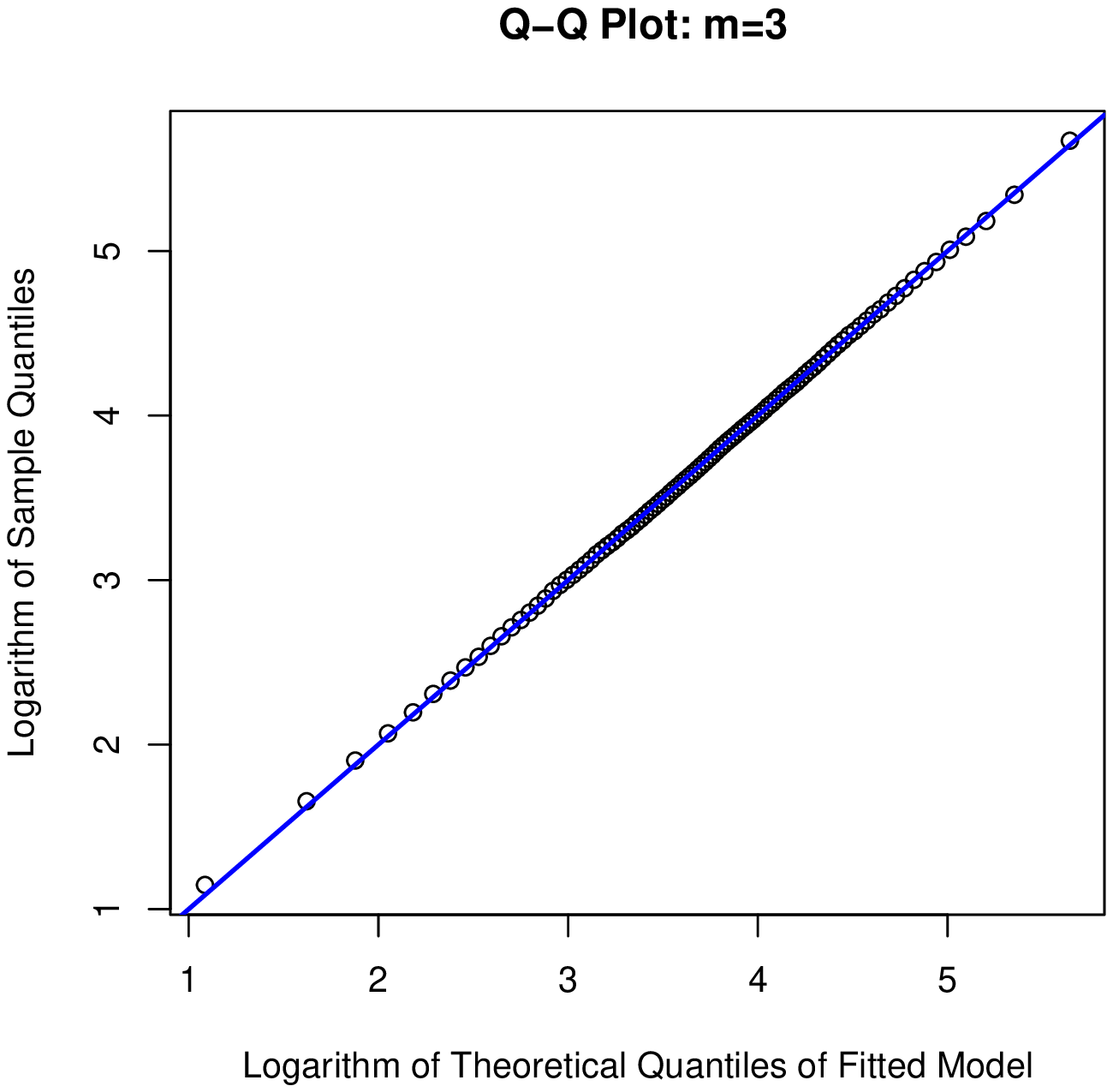}}
\hspace{0in}
{
\includegraphics[width=6cm]{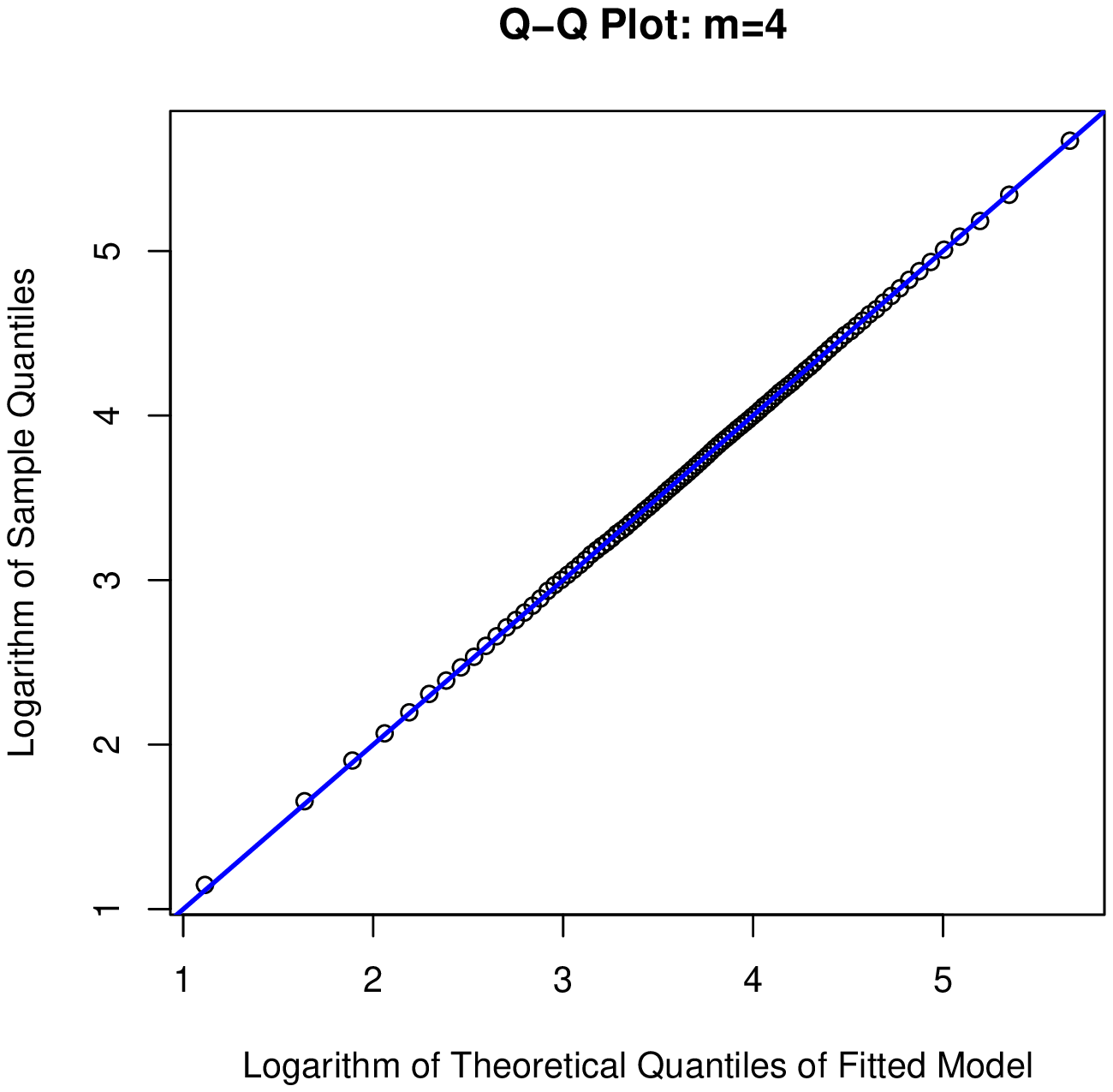}}

\hspace{0in}
\caption{QQ plots of fitted gamma mixtures.}
\label{Q-Q_plot_of_disposable_income_with_various_order_m}
\end{figure}

\section{Discussion and observations}
\label{sec7}
The finite Gamma mixture distributions are useful for modeling
positive data that is suspected to come from a heterogeneous population.
However, the MLE of the general Gamma mixture model is inconsistent.
We have shown that the MLE of the finite Gamma mixture model with
a structural shape parameter is strongly consistent.
The simulation results indicate that the MLE has respectable finite-sample properties
and observable consistency trends.
The real-data example demonstrates that this model
is able to reveal potential
subpopulation structure.

\section*{Acknowledgements}
We thank the China Institute for Income Distribution  for furnishing us with
the income data.
This work was supported by the
National Natural Science Foundation of China (Grant No.~11871419)
and the Natural Sciences and Engineering Research Council of Canada.

\bibliography{reference}

\end{document}